\numberwithin{equation}{section}
\theoremstyle{plain}
\newtheorem{theorem}{Theorem}[section]
\newtheorem{lemma}[theorem]{Lemma}
\newtheorem{proposition}[theorem]{Proposition}
\newtheorem{conjecture}[theorem]{Conjecture}
\newtheorem{question}[theorem]{Question}
\theoremstyle{definition}
\theoremstyle{remark}
\newtheorem{remark}[theorem]{Remark}
\newtheorem{case[theorem]}{Case}
\title[Magnitude of the sum of any $k$ points]{On the sums of any $k$ points in finite fields}
\author{David Covert, Doowon Koh*, and Youngjin Pi}
\address{Department of Mathematics and Computer Science\\
University of Missouri-St. Louis \\
St. Louis, MO 63121 USA} \email{covertdj@umsl.edu}
\address{Department of Mathematics\\
Chungbuk National University \\
Cheongju city, Chungbuk-Do 361-763 Korea}
\email{koh131@chungbuk.ac.kr}
\address{Department of Mathematics\\
Chungbuk National University \\
Cheongju city, Chungbuk-Do 361-763 Korea}
\email{pi@chungbuk.ac.kr}
\thanks{*Corresponding Author \\ Key words and phrases: Erd\H{o}s distance problem, restriction theorems,  finite fields. \\
Doowon Koh was supported by Basic Science Research Program through the National Research Foundation of Korea(NRF) funded by the Ministry of Education, Science and Technology(2012R1A1A1001510) }
\subjclass[2010]{52C10, 42B05, 11T23}
\begin{document}

\begin{abstract}
For a set $E\subset \mathbb F_q^d$, we define the $k$-resultant magnitude set as $ \Delta_k(E) =\{ \|\textbf{x}_1 + \dots + \textbf{x}_k\|\in \mathbb F_q: \textbf{x}_1, \dots , \textbf{x}_k \in E\},$ where $\|\textbf{v}\|=v_1^2+\cdots+ v_d^2$ for $\textbf{v}=(v_1, \ldots, v_d) \in \mathbb F_q^d.$ In this paper we find a connection between a lower bound of the cardinality of the $k$-resultant magnitude set and the restriction theorem for spheres in finite fields.  As a consequence, it is shown that if $E\subset \mathbb F_q^d$ with $|E|\geq C  q^{\frac{d+1}{2}-\frac{1}{6d+2}},$ then $|\Delta_3(E)|\geq c q$ for $d = 4$ or $d = 6$, and $|\Delta_4(E)| \geq cq$ for even dimensions $d \geq 8.$  In addition, we prove that if $d\geq 8$ is even, and $|E|\geq C_\varepsilon ~q^{\frac{d+1}{2} - \frac{1}{9d -18} + \varepsilon}$ for $\varepsilon >0$, then $|\Delta_3(E)|\geq c q.$

\end{abstract}
\maketitle

\section{Introduction}
Let $\mathbb F_q^d, d\geq 2$, be the $d$-dimensional vector space over a finite field with $q$ elements. Throughout the paper, we assume that the characteristic of $\mathbb F_q$ is not equal to two. For $E \subset \mathbb F_q^d,$ the distance set, denoted by $\Delta_2(E),$ is defined by
\[
\Delta_2(E)=\{\|\textbf{x}-\textbf{y}\|\in \mathbb F_q: \textbf{x},\textbf{y} \in E\},
\]
where $\|\textbf{v}\|=v_1^2+\cdots+ v_d^2$ for $\textbf{v}=(v_1, \ldots, v_d) \in \mathbb F_q^d.$  The Erd\H{o}s-Falconer distance problem in the finite field setting asks for the minimal threshold $\beta$ such that if $|E| \geq C q^{\beta}$ for a sufficiently large constant $C$, then we have $ |\Delta_2(E)|\geq c q$ for some $0<c\leq 1.$  The first distance result was obtained by Bourgain, Katz, and Tao (\cite{BKT}) when $q \equiv 3 \pmod{4}$ is a prime.  Iosevich and Rudnev (\cite{IR07}) studied the general field case, and they obtained the first explicit exponents.  Using discrete Fourier machinery,  they demonstrated that if $E\subset \mathbb F_q^d$ with $|E|\geq Cq^{\frac{d+1}{2}},$ for a sufficiently large constant $C$, then $|\Delta_2(E)|=q.$ \\

The authors in \cite{HIKR10} constructed arithmetic examples which show that the exponent $(d+1)/2$ due to Iosevich and Rudnev is sharp at least in odd dimensions.  Thus, the Erd\H os-Falconer distance problem has been completely resolved in odd dimensions.  On the other hand, it has been conjectured for even dimensions $d\geq 2$ that the exponent $(d+1)/2$ can be improved to the exponent $d/2.$  While this conjecture is open for all even dimensions, the sharp exponent $(d+1)/2$ for odd dimensions was improved for dimension two by the authors in \cite{CEHIK09}.  More precisely they proved that if $E\subset \mathbb F_q^2$ with $|E|\geq C q^{4/3}$ for a sufficiently large constant $C$, then $|\Delta_2(E)|\geq c q$ for some $0<c<1.$  However, the exponent $(d+1)/2$ has not been improved for higher even dimensions $d\geq 4.$  For further discussion on distance problems in finite fields, readers may refer to \cite{Di12, IK08, KP13, KS, KS12, KS13, Sh06, V08}.  See also \cite{Co13, CIP13}, and references contained therein for recent results on the distance problems in the ring setting.\\

The Erd\H{o}s-Falconer distance problem in finite fields can be extended in various directions.  One such direction is as follows.  For each integer $k\geq 2,$ let us consider a function $M_k: (\mathbb F_q^d)^k \to \mathbb F_q.$  Given this function, determine the minimal value $\beta $ such that whenever $E\subset \mathbb F_q^d$ satisfies $|E| \geq C q^{\beta}$ for a sufficiently large constant $C$,  we have $|M_k(E^k)|\geq c q$ for some constant $0<c \leq 1$ independent of $q.$  Note that when $M_2(\textbf{x},\textbf{y})=\|\textbf{x}-\textbf{y}\|$ for $\textbf{x},\textbf{y} \in \mathbb F_q^d,$ we are reduced the Erd\H{o}s-Falconer distance problem in the finite field setting as
\[
\Delta_2(E)=M_2(E\times E)=\{\|\textbf{x}-\textbf{y}\|\in \mathbb F_q: \textbf{x},\textbf{y} \in E\}.
\]

For $k \geq 2$, we will study the function
\[
M_k( \textbf{x}_1, \textbf{x}_2, \ldots, \textbf{x}_k)=\| \textbf{x}_1\pm \textbf{x}_2\pm \cdots \pm \textbf{x}_k\| ~~\mbox{for}~~ \textbf{x}_s \in \mathbb F_q^d,  s=1,2, \ldots, k,
\]
and we denote $M_k(E^k)$ by $\Delta_k(E)$ for $E\subset \mathbb F_q^d.$  Namely, for $E\subset \mathbb F_q^d,$ we define
\[
 \Delta_k(E)=\{ \|\textbf{x}_1\pm \textbf{x}_2 \pm \cdots \pm \textbf{x}_k\| \in \mathbb F_q : \textbf{x}_s \in E,  s=1,2, \ldots, k\}.
 \]
As the choice of signs will be independent of our results, we shall simply define
\[
\Delta_k(E)=\{ \|\textbf{x}_1+ \textbf{x}_2 + \cdots + \textbf{x}_k\| \in \mathbb F_q : \textbf{x}_s \in E,  s=1,2, \ldots, k\}.
\]
Throughout the paper,  the set $\Delta_k(E)$ will be referred to as the $k$-resultant magnitude set.  For brevity, we call $\Delta_2(E)$ the distance set, and when $k = 3$, we simply call $\Delta_3(E)$ the magnitude set.

\begin{question} \label{Q1.1} Let $E\subset \mathbb F_q^d, d\geq 2,$ and $k\geq 2$ be an integer.  Determine the smallest $\beta> 0$ such that if $|E|\ge C q^{\beta}$ with a sufficiently large constant $C>1$, then $|\Delta_k(E)|\ge c q$ for some $0<c \leq 1.$
\end{question}

It is clear that $|\Delta_{k_1}(E)|\leq |\Delta_{k_2}(E)|$ for $2\leq k_1\leq k_2.$  Therefore, as $k$ becomes larger, one might expect a smaller value $\beta$ as the answer to Question \ref{Q1.1}.  However, we conjecture that the answer to Question \ref{Q1.1} is independent of $k$.  For example, if $q=p^2$ for prime $p$ and $E=\mathbb F_p^d,$ then it clearly follows that $|E|=q^{d/2}$ and $|\Delta_k(E)|=\sqrt{q}$ for all $k\geq 2.$  This example says that $\beta$ in Question \ref{Q1.1} can not be smaller than $d/2$ which is the conjectured exponent for the Erd\H{o}s-Falconer distance problem in even dimensions.  This leads us to the following conjecture.  \begin{conjecture}\label{C1.2} Let $E\subset \mathbb F_q^d.$ If $d\geq 2$ is even and $|E|\geq C q^{d/2}$ for a sufficiently large constant $C,$ then for every integer $k\geq 2,$ there exists a constant $0 < c \leq 1$ such that
\[
|\Delta_k(E)| \geq c q.
\]
\end{conjecture}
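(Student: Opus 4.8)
\medskip
\noindent\textbf{Proof proposal for Conjecture~\ref{C1.2}.}
Fix a nontrivial additive character $\chi$ of $\mathbb F_q$ and write $\widehat E(\mathbf m)=\sum_{\mathbf x\in E}\chi(-\mathbf m\cdot\mathbf x)$. The natural plan is to count representations and apply Cauchy--Schwarz: for $t\in\mathbb F_q$ set
\[
\nu_k(t)=\#\{(\mathbf x_1,\dots,\mathbf x_k)\in E^k:\ \|\mathbf x_1+\cdots+\mathbf x_k\|=t\},
\]
so $\sum_{t}\nu_k(t)=|E|^k$ and hence $|\Delta_k(E)|\geq |E|^{2k}\big/\sum_t\nu_k(t)^2$. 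It therefore suffices to show $\sum_t\nu_k(t)^2\lesssim |E|^{2k}/q$ whenever $|E|\geq Cq^{d/2}$. Writing $E^{(k)}$ for the $k$-fold additive convolution of $E$ with itself, $\nu_k(t)=\sum_{\|\mathbf w\|=t}E^{(k)}(\mathbf w)$; expanding the indicator of the sphere as $q^{-1}\sum_s\chi(s(\|\mathbf w\|-t))$, inserting Fourier inversion for $E^{(k)}$, and completing the square in $\mathbf w$ gives
\[
\nu_k(t)=\frac{|E|^k}{q}+\frac{1}{q^{d+1}}\sum_{s\neq 0}\chi(-st)\,\mathcal G(s)\sum_{\mathbf m\in\mathbb F_q^d}\widehat E(\mathbf m)^k\,\chi\!\Big(-\tfrac{\|\mathbf m\|}{4s}\Big),
\]
where $\mathcal G(s)=\sum_{\mathbf w}\chi(s\|\mathbf w\|)$ is a Gauss sum of modulus $q^{d/2}$. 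The dangerous frequency $\mathbf m=0$ contributes $|E|^k\mathcal G(s)/q^{d+1}$ to the error, but this term (and the cross term between main term and error) dies once one sums against $\sum_t\chi(-st)=0$ for $s\neq0$.

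Squaring and summing over $t$, the orthogonality $\sum_t\chi(-(s+s')t)$ (which equals $q$ if $s'=-s$ and $0$ otherwise), together with $\mathcal G(s)\mathcal G(-s)=q^d$ and $A(-s)=\overline{A(s)}$ for $A(s):=\sum_{\mathbf m}\widehat E(\mathbf m)^k\chi(-\|\mathbf m\|/(4s))$, yields $\sum_t\nu_k(t)^2=|E|^{2k}/q+q^{-d-1}\sum_{s\neq0}|A(s)|^2$. A further application of orthogonality in $s$ (using that $s\mapsto 1/(4s)$ permutes $\mathbb F_q^\times$) collapses $\sum_{s\neq0}|A(s)|^2$ into $q\sum_{r\in\mathbb F_q}\big|\sum_{\|\mathbf m\|=r}\widehat E(\mathbf m)^k\big|^2$ minus a nonnegative quantity. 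The whole conjecture is thus reduced to proving
\[
\frac{1}{q^{d}}\sum_{r\in\mathbb F_q}\Big|\sum_{\mathbf m:\,\|\mathbf m\|=r}\widehat E(\mathbf m)^k\Big|^2\ \lesssim\ \frac{|E|^{2k}}{q}\qquad\text{whenever}\quad|E|\geq Cq^{d/2}.
\]

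This last inequality is exactly where a restriction/extension theorem for spheres in $\mathbb F_q^d$ enters. When $k=2\ell$ is even, $\widehat E(\mathbf m)^\ell=\widehat{E^{(\ell)}}(\mathbf m)$, so the inner sum is $\|\widehat{E^{(\ell)}}\|_{L^2(S_r)}^2$ with $S_r=\{\|\mathbf m\|=r\}$, and an $L^{p'}\!\to L^2(S_r)$ restriction estimate, uniform in $r$, bounds this by $q^{\alpha}\|E^{(\ell)}\|_{L^{p'}(\mathbb F_q^d)}^2$ for the appropriate exponent $\alpha$; one then estimates $\|E^{(\ell)}\|_{p'}$ by interpolating the trivial bounds $\|E^{(\ell)}\|_{1}=|E|^\ell$, $\|E^{(\ell)}\|_\infty\leq|E|^{\ell-1}$, and $\|E^{(\ell)}\|_2^2=q^{-d}\sum_{\mathbf m}|\widehat E(\mathbf m)|^{2\ell}$. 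For odd $k$ one factors $\widehat E^k=\widehat{E^{(\lceil k/2\rceil)}}\cdot\widehat{E^{(\lfloor k/2\rfloor)}}$, applies Cauchy--Schwarz in $\mathbf m$ and then in $r$, uses Plancherel $\sum_{\mathbf m}|\widehat E(\mathbf m)|^2=q^d|E|$ for one factor, and a restriction estimate for the other; balancing the resulting exponents against $|E|\approx q^{d/2}$ is precisely what produces the dimension- and $k$-dependent thresholds in our theorems (in particular the extra loss for $k=3$).

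The main obstacle is that the restriction estimate strong enough to yield the exponent $d/2$ for every even $d$ is itself the (open) sphere restriction conjecture in even dimensions; the Stein--Tomas-type $L^2$-restriction bound, and even its known improvements (e.g.\ in $d=4$), give only the partial exponents stated in the abstract. A secondary but unavoidable issue is the degenerate behaviour of the zero-sphere $S_0$: it contains the isotropic cone and its Fourier analysis differs from that of the spheres $S_r$, $r\neq0$, where $|S_r|=q^{d-1}+O(q^{(d-2)/2})$. Accordingly one works with $\Delta_k(E)\setminus\{0\}$ in the Cauchy--Schwarz step, controls $\nu_k(0)$ by hand, and separates the $\mathbf m=0$ and $\|\mathbf m\|=0$ frequencies; carrying the Gauss-sum factors $\mathcal G(s)$ through all these cases while keeping the constants independent of $q$ is routine but must be done with care.
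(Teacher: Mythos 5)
The statement you are asked to prove is Conjecture~\ref{C1.2}; the paper does not prove it and offers no argument for it beyond the motivating example showing that the exponent $d/2$ cannot be lowered. What you have written is not a proof either, and you say so yourself in your final paragraph: the entire argument funnels into the single inequality
\[
\frac{1}{q^{d}}\sum_{r\in\mathbb F_q}\Big|\sum_{\mathbf m:\,\|\mathbf m\|=r}\widehat E(\mathbf m)^k\Big|^2\ \lesssim\ \frac{|E|^{2k}}{q}\qquad\text{for}\quad|E|\geq Cq^{d/2},
\]
and the spherical restriction estimate needed to establish this at the exponent $d/2$ is itself an open problem. That is the gap, and it is not a technical one: the known $L^{2}\to L^{(2d+2)/(d-1)}$ (Stein--Tomas type) and $L^{(12d-8)/(9d-12)}\to L^4$ extension estimates for spheres are exactly what produce the weaker thresholds $q^{\frac{d+1}{2}-\frac{1}{6d+2}}$ and $q^{\frac{d+1}{2}-\frac{1}{9d-18}+\varepsilon}$ in Theorems~\ref{T1.3} and~\ref{T1.4}, so your reduction, as it stands, can only reprove those partial results, not the conjecture.

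As a reduction, your outline essentially reconstructs the paper's machinery: your Cauchy--Schwarz step is \eqref{R4.1}, your second-moment identity for $\nu_k$ is Lemma~\ref{L2.4} (via Proposition~\ref{P2.2}), your handling of $\nu_k(0)$ and the $\|\mathbf m\|=0$ frequencies is Lemmas~\ref{L2.5} and~\ref{L2.6}, and the passage to a restriction estimate on the spheres $S_r$, $r\neq 0$, is Lemma~\ref{kang1} combined with Lemma~\ref{L4.1}. Two smaller cautions: your claim that for even $k=2\ell$ one has $\widehat E(\mathbf m)^{\ell}=\widehat{E^{(\ell)}}(\mathbf m)$ and can then run an $L^{p'}\to L^2(S_r)$ argument still requires an estimate uniform in $r$ that is strong enough at the critical exponent, which again is not available; and the convolution powers $E^{(\ell)}$ satisfy $\|E^{(\ell)}\|_\infty\leq |E|^{\ell-1}$ only as a trivial bound, which is far too lossy near $|E|\approx q^{d/2}$. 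So the honest conclusion is that you have correctly identified the route the authors take and the precise point at which it stops short of Conjecture~\ref{C1.2}, but you have not proved the conjecture, and no proof should be claimed.
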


\subsection{Statement of results}
The techniques used by Iosevich and Rudnev in \cite{IR07} show that if $|E| \geq C q^{\frac{d+1}{2}}$ for a sufficiently large constant $C$, then $|\Delta_k(E)| = q$.  Note that the counterexamples for the Erd\H os-Falconer distance problem immediately show that the exponent $(d+1)/2$ can not be improved in general for odd dimensions.  Thus, we shall only focus on investigating the size of $\Delta_k(E)$  where $E \subset \mathbb{F}_q^d$ is a subset of an even dimensional vector space.  In this paper we demonstrate that the exponent $(d+1)/2$ for the magnitude set can be improved for even $d\geq 4.$  More precisely, we have the following results.
\begin{theorem}\label{T1.3} Let $E\subset \mathbb F_q^d.$ Suppose that $C$ is a sufficiently large constant.\\
\noindent{\it (1)} If  $d=4$ or $6,$ and
 $|E|\geq C q^{\frac{d+1}{2}-\frac{1}{6d+2}},$   then $|\Delta_3(E)|\geq c q$ for some $0 < c \leq 1.$\\
\noindent{\it (2)} If $d\ge 8$ is even and $|E|\geq C q^{\frac{d+1}{2}-\frac{1}{6d+2}},$ then $|\Delta_4(E)|\geq cq$ for some $0 < c \leq 1.$

\end{theorem}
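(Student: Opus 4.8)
The strategy is to reduce the lower bound on $|\Delta_k(E)|$ to an estimate for a certain weighted sum over the sphere, and then to feed into that sum the known $L^2$-restriction/extension estimates for spheres in $\mathbb F_q^d$. Concretely, for $t \in \mathbb F_q$ let $S_t = \{\mathbf{x} \in \mathbb F_q^d : \|\mathbf{x}\| = t\}$ be the sphere of radius $t$, and define the counting function
\[
\nu_k(t) = \#\{(\mathbf{x}_1, \dots, \mathbf{x}_k) \in E^k : \|\mathbf{x}_1 + \cdots + \mathbf{x}_k\| = t\}.
\]
Then $\sum_{t \in \mathbb F_q} \nu_k(t) = |E|^k$, so by Cauchy--Schwarz $|\Delta_k(E)| \geq |E|^{2k} / \sum_t \nu_k(t)^2$, and it suffices to show $\sum_t \nu_k(t)^2 \lesssim |E|^{2k}/q$ under the stated hypothesis on $|E|$. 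The quantity $\sum_t \nu_k(t)^2$ counts $2k$-tuples in $E$ whose two $k$-fold sums have equal magnitude; expanding $S_t$ via the Fourier transform of the indicator of the sphere (Gauss sums give the standard formula for $\widehat{S_t}$, with a main term of size $q^{-1}$ and Kloosterman-type error of size $q^{-(d+1)/2}$ off the origin), one writes $\sum_t \nu_k(t)^2 = q^{-1}|E|^{2k} + (\text{error})$, where the error term is a sum over nonzero frequencies of $|\widehat{E}(\xi)|^2$ paired against the extension operator applied $k$ times. The main term is exactly the desired size, so everything rests on controlling the error.

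**Reducing the error to restriction.** The error term, after unwinding, is essentially $\sum_{\xi \neq 0} |\widehat{E}(\xi)|^2 \, \bigl|\widehat{d\sigma}(\xi)\bigr|^{?} \cdots$ — more precisely it takes the form of an $L^2$ pairing of $E$ against the $k$-fold additive convolution of the sphere measure, so the relevant operator is the extension operator $f \, d\sigma \mapsto \widehat{f d\sigma}$ raised to a power matching $k$. The key input is the sphere restriction theorem in even dimensions: the $R^*(2 \to r)$ extension estimate for spheres $S_t \subset \mathbb F_q^d$ in the range of exponents established in the literature (Iosevich--Koh and subsequent refinements), which for even $d$ gives a nontrivial $r < \frac{2(d+1)}{d-1}$. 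Translating the error bound through this estimate converts the problem into requiring $|E|$ to exceed a threshold of the shape $q^{(d+1)/2 - \eta}$ where $\eta$ depends on the best available restriction exponent and on $k$; the larger $k$ is, the more convolutions of $d\sigma$ appear, each contributing additional decay, which is why $k = 3$ works only for $d = 4, 6$ while $k = 4$ works for all even $d \geq 8$. Concretely I would split the frequency sum into the "good" part (where the Kloosterman bound $q^{-(d+1)/2}$ is in force) and a possibly "bad" part (degenerate spheres, $\xi$ isotropic), handling the bad part by a direct count and the good part by the restriction estimate; optimizing the split yields the exponent $\frac{d+1}{2} - \frac{1}{6d+2}$.

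**Main obstacle.** The crux — and the reason the theorem has the particular dimension restrictions it does — is that one needs a genuinely sub-$\frac{2(d+1)}{d-1}$ exponent in the $L^2$ extension theorem for spheres, and such improvements over the trivial Stein--Tomas-type exponent are only known in even dimensions and only in certain low-dimensional or structured cases. So the hard part is not the Fourier-analytic bookkeeping (which is a by-now-standard expansion) but rather correctly invoking the sharp form of the even-dimensional sphere restriction theorem and tracking how many powers of the resulting gain survive after the $k$-fold convolution, since each convolution improves the decay but also the number of "$E$-factors" one must absorb. A secondary technical point is the careful treatment of the radius $t = 0$ sphere and of isotropic frequencies $\xi$ with $\|\xi\| = 0$, where the Gauss-sum formula for $\widehat{S_t}$ degenerates; these contribute at most a lower-order term but must be bounded by hand. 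Once the restriction exponent is plugged in and these degeneracies are dispatched, the Cauchy--Schwarz lower bound $|\Delta_k(E)| \gtrsim |E|^{2k}/\sum_t \nu_k(t)^2 \gtrsim q$ follows immediately.
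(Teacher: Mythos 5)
Your skeleton---Cauchy--Schwarz on the counting function $\nu_k$, Fourier expansion of $\widehat{S_t}$, and reduction of the error term to a restriction estimate for spheres---is exactly the paper's strategy (Lemmas \ref{L2.4}--\ref{L2.6} and \ref{L4.1}). But two of the points you defer are precisely where the content lies, and as stated they do not close. First, the restriction input you invoke is not the right one: you ask for an $L^2 \to L^r$ extension estimate with $r$ strictly below the Stein--Tomas exponent $\tfrac{2(d+1)}{d-1}$, and no such improvement is known for spheres in even dimensions; the $L^2$-based estimate only reproduces the exponent $\tfrac{d+1}{2}$. What the paper actually uses is the Iosevich--Koh improvement in the \emph{other} Lebesgue index, namely the $L^{(12d-8)/(9d-12)} \to L^4$ extension estimate for $S_t$, $t\neq 0$ (Proposition \ref{P3.1}), interpolated with the trivial $L^1 \to L^\infty$ bound and then dualized to control $\sum_{\textbf{v}\in S_t}|\widehat{E}(\textbf{v})|^k$ (Lemma \ref{L3.2}). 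This interpolation is only available when $k/(k-1) < (12d-8)/(9d-12)$, i.e.\ $k > (12d-8)/(3d+4)$, which is exactly what forces $k=3$ to work only for $d=4,6$ and $k=4$ for even $d\geq 8$; without identifying this estimate you cannot derive the exponent $\tfrac{d+1}{2}-\tfrac{1}{6d+2}$ nor explain the dimension split quantitatively.

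Second, the $t=0$ issue is not merely a lower-order nuisance to be ``bounded by hand.'' If you run Cauchy--Schwarz over all of $\mathbb F_q$, the denominator contains $\nu_k^2(0)$, and the Gauss-sum expansion shows $\nu_k(0)$ carries a term of size up to $q^{d/2}|E|^{k-1}$ coming from the isotropic frequencies $\|\textbf{m}\|=0$; the trivial bound on this term gives $\nu_k^2(0)\lesssim |E|^{2k}q^{-1+1/(3d+1)}$ in the regime $|E|\sim q^{\frac{d+1}{2}-\frac{1}{6d+2}}$, which only yields $|\Delta_k(E)|\gtrsim q^{1-1/(3d+1)}$ --- the result fails by exactly the margin you are trying to win. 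Moreover the restriction estimate of Proposition \ref{P3.1} is unavailable on the degenerate sphere $S_0$, so you cannot treat that sphere like the others. The paper's fix is structural: restrict the Cauchy--Schwarz to $t\in\mathbb F_q^*$, show $(|E|^k-\nu_k(0))^2\gtrsim |E|^{2k}$ (Lemma \ref{L2.5}), and exploit the exact algebraic cancellation between $\nu_k^2(0)$ and the $r=0$ sphere term $q^{2dk-d}\bigl|\sum_{\textbf{m}\in S_0}(\widehat{E}(\textbf{m}))^k\bigr|^2$ in the $L^2$ bound for $\nu_k$ (Lemma \ref{L2.6}). You would need both of these ingredients to make your outline into a proof.
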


\begin{theorem}\label{T1.4} Suppose that $d\geq 8$ is even and $E\subset \mathbb F_q^d.$
Then given $\varepsilon>0,$ there exists $C_{\varepsilon}>0$ such that
if $|E|\ge C_{\varepsilon}  q^{\frac{d+1}{2} - \frac{1}{9d -18} + \varepsilon},$ then $|\Delta_3(E)|\geq cq$ for some $0 < c \leq 1.$
\end{theorem}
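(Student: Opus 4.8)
The plan is to run the standard Fourier-analytic argument for finite-field distance-type problems, with a restriction theorem for spheres as the essential ingredient. Put
\[
\nu(t)=\bigl|\{(\mathbf{x}_1,\mathbf{x}_2,\mathbf{x}_3)\in E^3:\ \|\mathbf{x}_1+\mathbf{x}_2+\mathbf{x}_3\|=t\}\bigr|,
\]
so that $\sum_{t\in\mathbb F_q}\nu(t)=|E|^3$ and, by the Cauchy--Schwarz inequality,
\[
|\Delta_3(E)|\ \ge\ \frac{|E|^6}{\sum_{t}\nu(t)^2}.
\]
It therefore suffices to prove $\sum_t\nu(t)^2\lesssim |E|^6/q$ whenever $|E|\ge C_\varepsilon q^{\frac{d+1}{2}-\frac{1}{9d-18}+\varepsilon}$.

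Next I would evaluate $\sum_t\nu(t)^2$. Writing $\mathbf 1_{\{a=b\}}=q^{-1}\sum_{s\in\mathbb F_q}\chi(s(a-b))$ and completing the square against the form $\|\cdot\|$ (which produces a $d$-th power of a Gauss sum of modulus $q^{1/2}$), one obtains, with $\widehat{1_E}(\mathbf m)=\sum_{\mathbf x\in E}\chi(-\mathbf m\cdot\mathbf x)$, the exact identity
\[
\sum_t\nu(t)^2=\frac{|E|^6}{q}+\frac{1}{q^{d}}\sum_{r\in\mathbb F_q}\Bigl|\sum_{\|\mathbf m\|=r}\widehat{1_E}(\mathbf m)^3\Bigr|^2-\frac{1}{q^{d+1}}\Bigl|\sum_{\mathbf m}\widehat{1_E}(\mathbf m)^3\Bigr|^2 .
\]
The last (subtracted) term is $\le 0$ and is dropped for an upper bound; the frequency $\mathbf m=\mathbf 0$ (which lies only on the sphere $r=0$) contributes $O(q^{-d}|E|^6)$ and is negligible next to $|E|^6/q$. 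So everything reduces to the single estimate
\[
\sum_{r\in\mathbb F_q}\Bigl|\sum_{\substack{\mathbf m\ne\mathbf 0\\ \|\mathbf m\|=r}}\widehat{1_E}(\mathbf m)^3\Bigr|^2\ \lesssim\ q^{d-1}|E|^6 .
\]

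This is where the curvature of the sphere is used. Splitting $\widehat{1_E}^3=\widehat{1_E}\cdot\widehat{1_E}^2$ (note $\widehat{1_E}^2=\widehat{1_E\ast 1_E}$) and applying Cauchy--Schwarz on each sphere $S_r=\{\mathbf m:\|\mathbf m\|=r\}$, one reduces to controlling $\max_r\sum_{\mathbf m\in S_r}|\widehat{1_E}(\mathbf m)|^2$ against $\sum_{\mathbf m}|\widehat{1_E}(\mathbf m)|^4=q^d\cdot(\text{additive energy of }E)$; the trivial bounds $q^d|E|$ and $q^d|E|^3$ here only recover the Iosevich--Rudnev threshold $\frac{d+1}{2}$. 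To do better one decomposes the Fourier support of $1_E$ into dyadic level sets $E_\lambda=\{\mathbf m\ne\mathbf 0:|\widehat{1_E}(\mathbf m)|\sim\lambda\}$ and estimates the spherical sums over each piece by combining the trivial count with the restriction (extension) theorem for the sphere in $\mathbb F_q^d$ — the very link between $\Delta_k$ and restriction advertised in the abstract — and then interpolates the two and optimizes the dyadic parameter. For $d=4,6$ (and, with an extra convolution factor, for $\Delta_4$ in all even $d\ge8$) the available extension exponent for the sphere is strong enough to close the argument at $\frac{d+1}{2}-\frac{1}{6d+2}$, which is Theorem~\ref{T1.3}; for $\Delta_3$ with even $d\ge8$ it is not, and this more delicate dyadic/interpolation argument only yields the weaker exponent $\frac{d+1}{2}-\frac{1}{9d-18}$, with the logarithmic loss from the dyadic sum (and/or an $\varepsilon$-loss inherent in the best known extension estimate for spheres in even dimensions) accounting for the factor $q^{\varepsilon}$ and the $\varepsilon$-dependent constant.

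The main difficulty is intrinsic: squaring the cubic object $\widehat{1_E}^3$ produces a degree-six quantity that, unlike the distance set $\Delta_2$, cannot be handled by Plancherel alone; and the restriction theory of the sphere in $\mathbb F_q^d$ is weak, because $S_r$ contains affine subspaces of dimension about $d/2$, which caps the usable extension exponent well below its Euclidean analogue and leaves no single clean application of the estimate available once $d\ge8$. Extracting the improvement requires the dyadic/interpolation balance above — with care devoted to the level sets of $\widehat{1_E}$ that come closest to concentrating on a sphere, and to the additively structured sets $E$ that make the additive energy large — and getting this balance to produce exactly the exponent $\frac{1}{9d-18}$, rather than something weaker, is the delicate point.
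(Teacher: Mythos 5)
Your framework is the paper's: the Cauchy--Schwarz lower bound $|\Delta_3(E)|\ge (|E|^3-\nu(0))^2/\sum_{t\ne 0}\nu(t)^2$, the exact identity for $\sum_t\nu(t)^2$ in terms of $\sum_{r}|\sum_{\|\mathbf m\|=r}\widehat{1_E}(\mathbf m)^3|^2$, and the reduction to a restriction estimate on spheres all match the paper's Lemmas 2.4--2.6 and 4.1. But the entire content of Theorem 1.4 beyond this standard machinery is the specific restriction estimate that produces the exponent $\tfrac{1}{9d-18}$, and that is exactly the step you leave as a gesture. Your own words concede it: ``getting this balance to produce exactly the exponent $\frac{1}{9d-18}$, rather than something weaker, is the delicate point.'' Moreover, the route you sketch toward it is lossy. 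Splitting $\widehat{1_E}^3=\widehat{1_E}\cdot\widehat{1_E}^2$ and applying Cauchy--Schwarz on each sphere forces you to pay $\sum_{\mathbf m}|\widehat{1_E}(\mathbf m)|^4$, i.e.\ the additive energy of $E$, which can be as large as $|E|^3$; with the best available bound $\max_r\sum_{S_r}|\widehat{1_E}|^2\lesssim q^{(d-1)/2}|E|^2$ this only recovers the threshold $q^{(d+1)/2}$, as you note. The paper instead applies Cauchy--Schwarz so as to pay $\bigl[\max_{r\ne 0}\sum_{S_r}|\widehat E|^3\bigr]\cdot\sum_{\mathbf m}|\widehat E|^3$, where the second factor is controlled by Plancherel plus the trivial $L^\infty$ bound (its inequality (2.1)) and never sees the additive energy.

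The missing ingredient is the paper's Lemma 3.3: for $|E|\ge q^{(d-1)/2}$ and $t\ne 0$,
\[
\sum_{\mathbf v\in S_t}|\widehat E(\mathbf v)|^3\ \lessapprox\ q^{(-27d^2+75d+12)/(12d-32)}\,|E|^{(15d-46)/(6d-16)} .
\]
This is proved not by a dyadic decomposition of the level sets of $\widehat{1_E}$, but by H\"older interpolation at $L^3$ between two concrete endpoints: (i) a \emph{conditional} $L^2$ spherical restriction estimate $\sum_{\mathbf x\in S_t}|\widetilde E(\mathbf x)|^2\lesssim q^{(d-1)/2}|E|^2$, valid precisely because $|E|\ge q^{(d-1)/2}$ and proved from the explicit formula for $\widehat{S_t}$ (Gauss and Kloosterman sums, giving $|\widehat{S_t}(\mathbf n)|\lesssim q^{-(d+1)/2}$ for $\mathbf n\ne 0$); and (ii) the dual of the Iosevich--Koh $L^{(12d-8)/(9d-12)}\to L^4$ extension theorem for spheres, which gives $\|\widetilde E\|_{L^{(12d-8)/(3d+4)}(S_t,d\sigma)}\lessapprox|E|^{3/4}$. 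The interpolation is possible exactly because $2<3<(12d-8)/(3d+4)$ when $d\ge 8$, which is why this case is separated from Theorem 1.3. Your proposal contains neither the conditional $L^2$ estimate (the genuinely new input for large sets) nor the identification of which extension theorem is interpolated against it, so the exponent $\tfrac{1}{9d-18}$ is never derived. A secondary issue: your restriction estimates can only apply on spheres $S_r$ with $r\ne 0$, so the $r=0$ sphere must be removed from the sum; the paper does this by subtracting $\nu^2(0)$ and invoking its Lemma 2.6, whereas your sketch keeps the full $r$-sum and only discards the single frequency $\mathbf m=\mathbf 0$.
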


It seems from our results that the exponent $(d+1)/2$ can be improved for the Erd\H{o}s-Falconer distance problem in even dimensions $d\geq 4.$
\begin{remark}\label{Re1.5}  Aside from thinking of the cardinality of $\Delta_3(E)$ as the number of distinct distances of any three vectors in $E \subset \mathbb F_q^d,$ we can also consider it as the number of distinct distances between the origin and the centers of mass of  triangles determined by  $E \subset \mathbb F_q^d$ if $q$ has characteristic greater than $3$.  To see this, notice that if $\textbf{x},\textbf{y},\textbf{z} \in \mathbb F_q^d,$ then $(\textbf{x}+\textbf{y}+\textbf{z})/3$ can be considered as the center of mass of the triangle with vertices $\textbf{x},\textbf{y},\textbf{z}.$
\end{remark}

\subsection{Outline of the paper}
In the remaining parts of the paper, we first provide preliminary lemmas in Section \ref{S2}.  In Section \ref{S3}, we obtain the necessary restriction estimates for spheres.  In the final section, we deduce the formula for $|\Delta_k(E)|$ and we provide the link between the set $\Delta_k(E)$ and the restriction estimates for spheres.

\section{Discrete Fourier analysis and related lemmas}\label{S2}
 As a main technical tool, discrete Fourier analysis plays an important role in proving our results. In this section, we review the basic definitions, and we collect preliminary lemmas which are essential for providing a lower bound for $|\Delta_k(E)|.$

\subsection{Discrete Fourier analysis}
Throughout this paper, $\chi$ denotes a nontrivial additive character of $\mathbb F_q.$ The choice of the character $\chi$ will be independent of the results in this paper.   The orthogonality of the character $\chi$ implies
\[
 \sum_{\textbf{x}\in \mathbb F_q^d} \chi(\textbf{m}\cdot \textbf{x})=\left\{ \begin{array}{ll} 0  \quad&\mbox{if}~~ \textbf{m}\neq (0,\dots,0)\\
q^d  \quad &\mbox{if} ~~\textbf{m}= (0,\dots,0), \end{array}\right.
\]
where  $\textbf{m}\cdot \textbf{x}$ denotes the usual dot-product.  Given a function $g: \mathbb F_q^d \to \mathbb C,$   the Fourier transform of $g$, denoted by $\widetilde{g}$, is defined as
\begin{equation}\label{gtile}
\widetilde{g}(\textbf{x})= \sum_{\textbf{m}\in \mathbb F_q^d} g(\textbf{m}) \chi(-\textbf{x}\cdot \textbf{m})\quad \mbox{for}~~\textbf{x}\in \mathbb F_q^d.
\end{equation}
On the other hand,  if $f: \mathbb F_q^d \rightarrow \mathbb C,$ then  we denote by $\widehat{f}$ the {\bf normalized}  Fourier transform of the function $f$. Thus, we have
\[
 \widehat{f}(\textbf{m})= \frac{1}{q^d} \sum_{\textbf{x}\in \mathbb F_q^d} f(\textbf{x}) \chi(-\textbf{x}\cdot \textbf{m})\quad \mbox{for}~~\textbf{m}\in \mathbb F_q^d.
 \]
We also write $ f^{\vee}(\textbf{m})$ for $\widehat{f}(-\textbf{m}).$ Notice that $\widetilde{ (f^{\vee})}(\textbf{x})=f(\textbf{x})$ for $\textbf{x}\in \mathbb F_q^d.$ Namely,  the Fourier inversion theorem in this content is given by the formula
\[
f(\textbf{x})= \sum_{\textbf{m}\in \mathbb F_q^d}  \widehat{f}(\textbf{m}) \chi(\textbf{m}\cdot \textbf{x}) \quad\mbox{for}~~\textbf{x}\in \mathbb F_q^d.
\]

\begin{remark} Throughout the rest of the article, we will write $E(\textbf{x})$ for the characteristic function (or indicator function) of a set $E\subset \mathbb F_q^d.$
\end{remark}
As a direct application of the orthogonality relation of $\chi,$ it follows that
\[
 \sum_{\textbf{m}\in \mathbb F_q^d} |\widehat{f}(\textbf{m})|^2 = \frac{1}{q^d} \sum_{\textbf{x}\in \mathbb F_q^d} |f(\textbf{x})|^2.
 \]
We refer to this formula as the Plancherel theorem. For example, if we take $f(\textbf{x})=E(x)$, then  the Plancherel theorem implies that
\[
 \sum_{\textbf{m}\in \mathbb F_q^d} |\widehat{E}(\textbf{m})|^2 = \frac{|E|}{q^d}.
 \]
Furthermore,  since $\left|\widehat{E}(\textbf{m})\right| \leq q^{-d}|E|,$ it is clear that for every integer $k\ge 2,$
\begin{equation}\label{R2.1}
\sum_{\textbf{m}\in \mathbb F_q^d} |\widehat{E}(\textbf{m})|^k \le \frac{|E|^{k-2} }{q^{d(k-2)}  } \sum_{\textbf{m}\in \mathbb F_q^d} |\widehat{E}(\textbf{m})|^2  =  \frac{|E|^{k-1}}{q^{dk-d}}.
\end{equation}

We now collect information about the normalized Fourier transform on the sphere.
For $t\in \mathbb F_q,$ the sphere $S_t \subset \mathbb F_q^d$ is defined by
\[
S_t=\{\textbf{x}\in \mathbb F_q^d: x_1^2+\cdots+x_d^2=t\}.
\]
It is well known from Theorem 6.26 and Theorem 6.27 in \cite{LN97} that if  $d\ge3$ and $t \in \mathbb F_q,$ then
\begin{equation}\label{SizeS}
|S_t| = q^{d-1}(1 + o(1)).
\end{equation}

The following result follows immediately from Lemma 4 in \cite{IK10}.
\begin{proposition}\label{P2.1} Let $d\geq 2$ be even and $t\in \mathbb F_q.$ Then, for $\textbf{m}\in \mathbb F_q^d,$
\[  \widehat{S_t}(\textbf{m}) = q^{-1} \delta_0(\textbf{m}) + q^{-d-1} G^d \sum_{\ell \in {\mathbb F}_q^*}
\chi\Big(t\ell+ \frac{\|\textbf{m}\|}{4\ell}\Big),
\]
where $\delta_0(\textbf{m})$ is the delta-function, so that $\delta_0(\textbf{m})=1$ for $\textbf{m}=(0, \ldots, 0)$ and $\delta_0(\textbf{m})=0$ otherwise, and $G$ denotes the Gauss sum
\[
\displaystyle G =\sum_{s\in \mathbb F_q^*} \eta(s) \chi(s) ,
\]
where $\eta$ is the quadratic character of $\mathbb F_q,$ and $\mathbb F_q^* = \mathbb F_q \setminus \{0\}$.  In particular, we have
\[
\widehat{S_0}(\textbf{m})=q^{-1} \delta_0(\textbf{m}) + q^{-d-1} G^d \sum_{\ell \in {\mathbb F}_q^*} \chi( \|\textbf{m}\| \ell)\quad \mbox{for}~~\textbf{m}\in \mathbb F_q^d.
\]
\end{proposition}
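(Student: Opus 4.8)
The plan is to derive the formula directly from orthogonality by completing squares coordinate-by-coordinate, rather than invoking \cite{IK10} as a black box. First I would express the indicator function of the sphere as a character sum: for $\textbf{x}\in\mathbb F_q^d$,
\[
S_t(\textbf{x}) = \frac{1}{q}\sum_{\ell\in\mathbb F_q}\chi\bigl(\ell(\|\textbf{x}\|-t)\bigr),
\]
which is valid because the inner sum equals $q$ when $\|\textbf{x}\|=t$ and vanishes otherwise. Substituting this into the definition of the normalized Fourier transform and interchanging the order of summation gives
\[
\widehat{S_t}(\textbf{m}) = \frac{1}{q^{d+1}}\sum_{\ell\in\mathbb F_q}\chi(-\ell t)\sum_{\textbf{x}\in\mathbb F_q^d}\chi\bigl(\ell\|\textbf{x}\|-\textbf{x}\cdot\textbf{m}\bigr).
\]

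Next I would separate off the term $\ell=0$: there the inner sum is $\sum_{\textbf{x}}\chi(-\textbf{x}\cdot\textbf{m}) = q^d\delta_0(\textbf{m})$ by orthogonality, contributing exactly $q^{-1}\delta_0(\textbf{m})$, the first term in the claimed formula. For each fixed $\ell\in\mathbb F_q^*$ the sum over $\textbf{x}$ factors as a product over coordinates,
\[
\sum_{\textbf{x}\in\mathbb F_q^d}\chi\bigl(\ell\|\textbf{x}\|-\textbf{x}\cdot\textbf{m}\bigr) = \prod_{j=1}^d\sum_{x_j\in\mathbb F_q}\chi\bigl(\ell x_j^2-m_jx_j\bigr),
\]
and, since the characteristic is not $2$, I can complete the square $\ell x_j^2-m_jx_j = \ell\bigl(x_j-\tfrac{m_j}{2\ell}\bigr)^2-\tfrac{m_j^2}{4\ell}$, so each factor equals $\chi\bigl(-\tfrac{m_j^2}{4\ell}\bigr)\sum_{y\in\mathbb F_q}\chi(\ell y^2)$. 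Using the standard Gauss sum evaluation $\sum_{y\in\mathbb F_q}\chi(\ell y^2)=\eta(\ell)G$ for $\ell\neq0$ and multiplying over $j=1,\dots,d$ turns the product into $\eta(\ell)^d G^d\,\chi\bigl(-\tfrac{\|\textbf{m}\|}{4\ell}\bigr)$, where I used that $\sum_j m_j^2=\|\textbf{m}\|$.

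Finally, since $d$ is even we have $\eta(\ell)^d=1$, so the combined contribution of the nonzero $\ell$ is
\[
\frac{G^d}{q^{d+1}}\sum_{\ell\in\mathbb F_q^*}\chi\Bigl(-\ell t-\frac{\|\textbf{m}\|}{4\ell}\Bigr),
\]
and the substitution $\ell\mapsto-\ell$, a bijection of $\mathbb F_q^*$, rewrites this as $q^{-d-1}G^d\sum_{\ell\in\mathbb F_q^*}\chi\bigl(t\ell+\tfrac{\|\textbf{m}\|}{4\ell}\bigr)$. Adding back the $\ell=0$ term yields the stated identity for $\widehat{S_t}(\textbf{m})$, and the special case $t=0$ follows at once. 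I do not anticipate any real obstacle here: the only things requiring care are the sign and normalization conventions for the Gauss sum $G=\sum_{s\in\mathbb F_q^*}\eta(s)\chi(s)$ and the explicit use of the hypothesis that $d$ is even, which is precisely what makes the quadratic-character factor $\eta(\ell)^d$ disappear and leaves the clean formula above. Alternatively, one may simply cite Lemma 4 of \cite{IK10} and observe that the statement is its specialization to even $d$.
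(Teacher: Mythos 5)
Your derivation is correct in every step: the orthogonality expansion of the indicator of $S_t$, the isolation of the $\ell=0$ term giving $q^{-1}\delta_0(\textbf{m})$, the coordinatewise completion of the square (legitimate since the characteristic is odd), the evaluation $\sum_{y}\chi(\ell y^2)=\eta(\ell)G$, the disappearance of $\eta(\ell)^d$ for even $d$, and the change of variable $\ell\mapsto-\ell$ to match the stated sign. The $t=0$ case also follows as you say, via the further substitution $\ell\mapsto 1/(4\ell)$ on $\mathbb F_q^*$. The only difference from the paper is that the paper offers no argument at all here --- it simply declares the proposition to be an immediate consequence of Lemma 4 in \cite{IK10} --- so what you have written is essentially the standard proof of that cited lemma, specialized to even $d$. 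Supplying it makes the paper self-contained at the cost of a page of routine computation; citing it, as the authors do, is also defensible since the computation is classical. Either way, there is no gap.
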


\begin{remark}\label{Re2.3} Recall that the Gauss sum satisfies $|G|=\sqrt{q}.$  For $a,b\in \mathbb F_q$, the Kloosterman sum is defined by
\[
 K(a,b) := \sum_{\ell \in \mathbb F_q^*} \chi(a\ell + b/\ell).
 \]
It is well known that $|K(a,b)|\leq 2\sqrt{q}$ for $ab\neq 0.$ For the proof of the Gauss and Kloosterman sum estimation,  see \cite{IK04, LN97}.
\end{remark}

The following result was proved in Proposition 2.2 in \cite{KS13}.
\begin{proposition} \label{P2.2} For $\textbf{m}, \textbf{v} \in \mathbb F_q^d,$ we have
\[
 \sum_{t\in \mathbb F_q} \widehat{S_t}(\textbf{m})~ \overline{\widehat{S_t}}(\textbf{v}) =  q^{-1}\delta_0(\textbf{m})~\delta_0(\textbf{v}) + q^{-d-1} \sum_{s\in\mathbb F_q^*} \chi(s(\|\textbf{m}\|-\|\textbf{v}\|)).
\]
\end{proposition}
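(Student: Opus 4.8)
The plan is to reduce the whole identity to the orthogonality relation for $\chi$ together with the explicit formula for $\widehat{S_t}$ furnished by Proposition \ref{P2.1}. First I would apply that proposition to write, for every $t\in\mathbb F_q$,
\[
\widehat{S_t}(\textbf{m}) = q^{-1}\delta_0(\textbf{m}) + q^{-d-1}G^d\sum_{\ell\in\mathbb F_q^*}\chi\Big(t\ell + \frac{\|\textbf{m}\|}{4\ell}\Big),
\]
and, taking complex conjugates and using $\overline{\chi(a)}=\chi(-a)$ together with $\overline{G^d}\,G^d = |G|^{2d} = q^d$ from Remark \ref{Re2.3},
\[
\overline{\widehat{S_t}}(\textbf{v}) = q^{-1}\delta_0(\textbf{v}) + q^{-d-1}\overline{G^d}\sum_{\ell'\in\mathbb F_q^*}\chi\Big(-t\ell' - \frac{\|\textbf{v}\|}{4\ell'}\Big).
\]

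Next I would expand the product $\widehat{S_t}(\textbf{m})\,\overline{\widehat{S_t}}(\textbf{v})$ into four pieces: the $\delta_0\delta_0$ term, two mixed terms each carrying one $\delta_0$ and a single sum over $\ell$ or $\ell'$, and the main double sum over $(\ell,\ell')$; then sum over $t\in\mathbb F_q$. The orthogonality relation $\sum_{t\in\mathbb F_q}\chi(t\ell)=q\,\delta_0(\ell)$ does all of the work. The constant term contributes $q\cdot q^{-2}\delta_0(\textbf{m})\delta_0(\textbf{v}) = q^{-1}\delta_0(\textbf{m})\delta_0(\textbf{v})$; each mixed term vanishes because $\sum_t\chi(\pm t\ell)=0$ for $\ell\in\mathbb F_q^*$; and in the double sum the $t$-summation forces $\ell=\ell'$, yielding a factor $q$ and collapsing it to a single sum over $\ell\in\mathbb F_q^*$.

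After inserting $\overline{G^d}G^d = q^d$, the surviving contribution is
\[
q^{-2d-2}\cdot q^d\cdot q\sum_{\ell\in\mathbb F_q^*}\chi\Big(\frac{\|\textbf{m}\|-\|\textbf{v}\|}{4\ell}\Big) = q^{-d-1}\sum_{\ell\in\mathbb F_q^*}\chi\Big(\frac{\|\textbf{m}\|-\|\textbf{v}\|}{4\ell}\Big),
\]
and the change of variables $s = 1/(4\ell)$, a bijection of $\mathbb F_q^*$ since the characteristic is not $2$, turns this into $q^{-d-1}\sum_{s\in\mathbb F_q^*}\chi(s(\|\textbf{m}\|-\|\textbf{v}\|))$, which together with the $\delta_0\delta_0$ term is exactly the asserted formula.

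There is no genuine obstacle: the proposition is a bookkeeping exercise in discrete Fourier analysis. The only points needing care are tracking the complex conjugation so that the $\ell'$-phase carries the right sign and the two mixed terms really cancel after the $t$-sum, invoking $|G|^{2d}=q^d$ so that the Gauss-sum prefactors combine into a clean power of $q$, and observing that $\ell\mapsto 1/(4\ell)$ is well defined because $2$ is a unit. As an alternative one can bypass Proposition \ref{P2.1} entirely: writing $\widehat{S_t}(\textbf{m})=q^{-d}\sum_{\|\textbf{x}\|=t}\chi(-\textbf{x}\cdot\textbf{m})$, the sum over $t$ detects the condition $\|\textbf{x}\|=\|\textbf{y}\|$, which one resolves by $q^{-1}\sum_{s\in\mathbb F_q}\chi(s(\|\textbf{x}\|-\|\textbf{y}\|))$; completing the square in each coordinate of $\textbf{x}$ and $\textbf{y}$ and using $G^2=\eta(-1)q$ then gives the same identity, and in fact shows it holds with no parity assumption on $d$.
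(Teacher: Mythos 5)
Your proof is correct. Note that the paper itself does not prove this proposition --- it simply cites Proposition 2.2 of \cite{KS13} --- so there is no in-paper argument to compare against; your write-up supplies a proof that the paper omits. Both of your routes check out: the expansion via Proposition \ref{P2.1}, orthogonality in $t$ killing the cross terms and forcing $\ell=\ell'$, the combination $G^d\overline{G^d}=|G|^{2d}=q^d$, and the substitution $s=1/(4\ell)$ (legitimate since $4$ is a unit in odd characteristic) all land exactly on the stated identity. One point worth making explicit: Proposition \ref{P2.1} is stated only for \emph{even} $d$, so your primary argument proves the identity only in that case, whereas Proposition \ref{P2.2} is stated without a parity restriction. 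Your closing alternative --- writing $\widehat{S_t}(\textbf{m})=q^{-d}\sum_{\|\textbf{x}\|=t}\chi(-\textbf{x}\cdot\textbf{m})$, detecting $\|\textbf{x}\|=\|\textbf{y}\|$ with $q^{-1}\sum_{s}\chi(s(\|\textbf{x}\|-\|\textbf{y}\|))$, completing the square, and using $G^2=\eta(-1)q$ so that the $\eta(\pm s)^d$ and Gauss-sum factors collapse to $q^d$ --- closes this gap and is almost certainly the argument of \cite{KS13}; since the paper only ever applies the proposition in even dimensions, either route suffices for its purposes, but the direct computation is the one that actually proves the proposition as stated.
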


\subsection{Evaluation of the counting function $\nu_k$} Let $E\subset \mathbb F_q^d$ and let $k\geq 2$ be an integer. For $t\in \mathbb F_q,$ we define the counting function $\nu_k(t)$ by
\begin{align*}
\nu_k(t):
&=|\{(\textbf{x}_1, \textbf{x}_2, \ldots, \textbf{x}_k)\in E^k: \|\textbf{x}_1+ \textbf{x}_2+\cdots+ \textbf{x}_k\|=t\}|
\\
&=\sum_{\textbf{x}_1,\ldots,\textbf{x}_k\in E} S_t(\textbf{x}_1+\textbf{x}_2+\cdots+\textbf{x}_k).
\end{align*}

Applying the Fourier inversion theorem to $S_t(\textbf{x}_1+\textbf{x}_2+\cdots+\textbf{x}_k),$  it follows from the definition of the normalized Fourier transform that
\begin{equation}\label{R2.2}
\nu_k(t)=q^{dk} \sum_{\textbf{m}\in \mathbb F_q^d} \widehat{S_t}(\textbf{m}) \left(\overline{\widehat{E}(\textbf{m})}\right)^k.
\end{equation}

Then an $L^2$ estimate of $\nu_k$ is as follows.
\begin{lemma}\label{L2.4} Let $E\subset \mathbb F_q^d, d\geq 2.$ Then we have
\[
\sum_{t\in \mathbb F_q} \nu^2_k(t) \leq q^{-1}|E|^{2k} + q^{2dk-d} \sum_{r\in \mathbb F_q} \left| \sum_{\textbf{v}\in S_r} \left(\widehat{E}(\textbf{v})\right)^k \right|^2.\]
\end{lemma}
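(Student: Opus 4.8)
The plan is to start from the Fourier expansion \eqref{R2.2} of $\nu_k(t)$, square it, sum over $t\in\mathbb F_q$, and move the $t$-sum inside so that Proposition \ref{P2.2} can be applied. Since $\nu_k(t)$ is real, I would write $\nu_k(t)^2=\nu_k(t)\,\overline{\nu_k(t)}$; expanding \eqref{R2.2} together with its conjugate and interchanging the order of summation gives
\[
\sum_{t\in\mathbb F_q}\nu_k(t)^2
= q^{2dk}\sum_{\textbf{m},\textbf{v}\in\mathbb F_q^d}\big(\overline{\widehat{E}(\textbf{m})}\big)^k\big(\widehat{E}(\textbf{v})\big)^k
\sum_{t\in\mathbb F_q}\widehat{S_t}(\textbf{m})\,\overline{\widehat{S_t}(\textbf{v})}.
\]
Substituting the identity of Proposition \ref{P2.2} for the inner $t$-sum splits the right-hand side into a diagonal contribution coming from $q^{-1}\delta_0(\textbf{m})\delta_0(\textbf{v})$ and a main contribution coming from $q^{-d-1}\sum_{s\in\mathbb F_q^*}\chi\big(s(\|\textbf{m}\|-\|\textbf{v}\|)\big)$.

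For the diagonal contribution, only $\textbf{m}=\textbf{v}=\textbf{0}$ survives, and since $\widehat{E}(\textbf{0})=|E|/q^d$ this term is exactly $q^{2dk}\cdot q^{-1}\cdot |E|^{2k}q^{-2dk}=q^{-1}|E|^{2k}$, the first term of the claimed bound. For the main contribution, I would use $\chi\big(s(\|\textbf{m}\|-\|\textbf{v}\|)\big)=\chi(s\|\textbf{m}\|)\chi(-s\|\textbf{v}\|)$ to separate the $\textbf{m}$- and $\textbf{v}$-variables, so that for each fixed $s\in\mathbb F_q^*$ the inner double sum factors as
\[
\Big(\sum_{\textbf{m}}\big(\overline{\widehat{E}(\textbf{m})}\big)^k\chi(s\|\textbf{m}\|)\Big)\Big(\sum_{\textbf{v}}\big(\widehat{E}(\textbf{v})\big)^k\chi(-s\|\textbf{v}\|)\Big),
\]
and the two factors are complex conjugates of each other. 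Hence the main contribution equals
\[
q^{2dk-d-1}\sum_{s\in\mathbb F_q^*}\Big|\sum_{\textbf{v}\in\mathbb F_q^d}\big(\widehat{E}(\textbf{v})\big)^k\chi(-s\|\textbf{v}\|)\Big|^2,
\]
which is a nonnegative quantity.

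Finally, to match the stated form I would group $\textbf{v}$ by the value $r=\|\textbf{v}\|$, writing $\sum_{\textbf{v}}(\widehat{E}(\textbf{v}))^k\chi(-s\|\textbf{v}\|)=\sum_{r\in\mathbb F_q}a_r\chi(-sr)$ with $a_r:=\sum_{\textbf{v}\in S_r}(\widehat{E}(\textbf{v}))^k$. Adding back the $s=0$ frequency (which only increases the sum, since the $s=0$ term is $|\sum_r a_r|^2\ge 0$) and applying the orthogonality of $\chi$ over $\mathbb F_q$ gives $\sum_{s\in\mathbb F_q^*}\big|\sum_r a_r\chi(-sr)\big|^2\le\sum_{s\in\mathbb F_q}\big|\sum_r a_r\chi(-sr)\big|^2=q\sum_{r\in\mathbb F_q}|a_r|^2$. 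Combining this with the factor $q^{2dk-d-1}$ produces exactly $q^{2dk-d}\sum_{r}\big|\sum_{\textbf{v}\in S_r}(\widehat{E}(\textbf{v}))^k\big|^2$, the second term of the inequality. There is no essential obstacle in this argument; the only points demanding a little care are tracking the complex conjugates so that the main contribution is genuinely exhibited as a sum of squares, and the harmless insertion of the $s=0$ term, after which the estimate reduces to Plancherel on $\mathbb F_q$ and the already-proven Proposition \ref{P2.2}.
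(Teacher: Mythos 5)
Your proof is correct and follows essentially the same route as the paper: both expand $\sum_t \nu_k^2(t)$ via \eqref{R2.2} and Proposition \ref{P2.2}, and the nonnegative $s=0$ term you add back is exactly the quantity $q^{2dk-d-1}\bigl|\sum_{\textbf{v}}(\widehat{E}(\textbf{v}))^k\bigr|^2$ that the paper discards after applying orthogonality in $s$. The only cosmetic difference is that you exhibit the main contribution as a sum of squares over $s$ before passing to the level sets $S_r$, whereas the paper evaluates the character sum in $s$ first; the underlying inequality is identical.
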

\begin{proof}
Since $\nu^2_k(t)=\nu_k(t)~ \overline{\nu_k(t)},$  we see from \eqref{R2.2} that
\[
 \sum_{t\in \mathbb F_q} \nu^2_k(t)=q^{2dk} \sum_{\textbf{m}, \textbf{v}\in \mathbb F_q^d}
\left(\overline{\widehat{E}(\textbf{m})}\right)^k  \left(\widehat{E}(\textbf{v})\right)^k
\left( \sum_{t\in \mathbb F_q} \widehat{S_t}(\textbf{m}) \overline{ \widehat{S_t}(\textbf{v})}\right).
\]
From Proposition \ref{P2.2}, we conclude that
\begin{align*}  \sum_{t\in \mathbb F_q} \nu^2_k(t)
&=q^{-1}|E|^{2k} + q^{2dk-d}\hspace{-2mm} \sum_{\substack{\textbf{m}, \textbf{v}\in \mathbb F_q^d:\\ \|\textbf{m}\|=\|\textbf{v}\|}}\hspace{-3mm} \left(\overline{\widehat{E}(\textbf{m})}\right)^k \hspace{-1mm} \left(\widehat{E}(\textbf{v})\right)^k\hspace{-2mm}
-q^{2dk-d-1} \left| \sum_{\textbf{v}\in \mathbb F_q^d}  \left( \widehat{E}(\textbf{v})\right)^k\right|^2\\
&\le q^{-1}|E|^{2k} + q^{2dk-d}\hspace{-2mm} \sum_{\substack{\textbf{m}, \textbf{v}\in \mathbb F_q^d:\\ \|\textbf{m}\|=\|\textbf{v}\|}}\hspace{-3mm} \left(\overline{\widehat{E}(\textbf{m})}\right)^k \hspace{-1mm} \left(\widehat{E}(\textbf{v})\right)^k \\
&=  q^{-1}|E|^{2k} + q^{2dk-d} \sum_{r\in \mathbb F_q} \left| \sum_{\textbf{v}\in S_r} \left(\widehat{E}(\textbf{v})\right)^k \right|^2.\end{align*}
\end{proof}
We need the following lemma.
\begin{lemma}\label{L2.5}
Suppose that  $d\ge 2$ is even and $k\ge 2$ is an integer.
If $E\subset \mathbb F_q^d$ with $|E|\ge 3 q^{d/2},$ then we have
\[
 \left(|E|^k-\nu_k(0)\right)^2 \geq \frac{|E|^{2k}}{9}.
 \]
\end{lemma}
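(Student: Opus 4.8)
The plan is to establish the stronger bound $\nu_k(0)\le \tfrac{2}{3}|E|^k$. This suffices, because $|E|^k-\nu_k(0)$ is the number of $k$-tuples $(\textbf{x}_1,\dots,\textbf{x}_k)\in E^k$ with $\|\textbf{x}_1+\cdots+\textbf{x}_k\|\ne 0$ and hence is nonnegative; the bound then forces $|E|^k-\nu_k(0)\ge\tfrac13|E|^k$, and squaring gives $\bigl(|E|^k-\nu_k(0)\bigr)^2\ge\tfrac19|E|^{2k}$.

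To control $\nu_k(0)$ I would begin from the exact identity \eqref{R2.2} with $t=0$ and insert Proposition~\ref{P2.1}. The key step is the elementary evaluation $\sum_{\ell\in\mathbb F_q^*}\chi(\|\textbf{m}\|\ell)=q\,S_0(\textbf{m})-1$ (the inner sum equals $q-1$ when $\|\textbf{m}\|=0$ and $-1$ otherwise, where $S_0(\textbf{m})$ is the indicator function of the sphere $S_0$), which recasts Proposition~\ref{P2.1} as $\widehat{S_0}(\textbf{m})=q^{-1}\delta_0(\textbf{m})+q^{-d}G^dS_0(\textbf{m})-q^{-d-1}G^d$. Substituting this into \eqref{R2.2} and evaluating the $\delta_0$-term via $\widehat{E}(\textbf{0})=q^{-d}|E|$, one obtains
\[
\nu_k(0)=q^{-1}|E|^k+q^{dk-d}G^d\Bigl(\sum_{\textbf{m}\in S_0}\bigl(\overline{\widehat{E}(\textbf{m})}\bigr)^{k}-q^{-1}\sum_{\textbf{m}\in\mathbb F_q^d}\bigl(\overline{\widehat{E}(\textbf{m})}\bigr)^{k}\Bigr),
\]
so the main term is exactly $q^{-1}|E|^k$ while the contributions of $\delta_0$ and of $S_0$ stay bundled together inside the parentheses.

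For the error term I would rewrite the bracket as $(1-q^{-1})\sum_{\textbf{m}\in S_0}\bigl(\overline{\widehat{E}(\textbf{m})}\bigr)^{k}-q^{-1}\sum_{\textbf{m}\notin S_0}\bigl(\overline{\widehat{E}(\textbf{m})}\bigr)^{k}$. Since the characteristic of $\mathbb F_q$ is odd, $q\ge 3$, so $q^{-1}\le 1-q^{-1}$, and the triangle inequality bounds the modulus of this expression by $(1-q^{-1})\sum_{\textbf{m}\in\mathbb F_q^d}|\widehat{E}(\textbf{m})|^k$, which by \eqref{R2.1} is at most $(1-q^{-1})q^{-d(k-1)}|E|^{k-1}$. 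Because $|G^d|=q^{d/2}$ (recall $|G|=\sqrt q$), this gives $\bigl|\nu_k(0)-q^{-1}|E|^k\bigr|\le (1-q^{-1})q^{d/2}|E|^{k-1}$, hence
\[
|E|^k-\nu_k(0)\ge (1-q^{-1})|E|^k-(1-q^{-1})q^{d/2}|E|^{k-1}=(1-q^{-1})|E|^{k-1}\bigl(|E|-q^{d/2}\bigr).
\]
Invoking $|E|\ge 3q^{d/2}$ (so $|E|-q^{d/2}\ge\tfrac23|E|$) and $q\ge 3$ (so $1-q^{-1}\ge\tfrac23$) then gives $|E|^k-\nu_k(0)\ge\tfrac49|E|^k\ge\tfrac13|E|^k$, and squaring completes the proof.

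The delicate step is this final accounting of constants. A cruder approach — peeling off the frequency $\textbf{m}=\textbf{0}$ and using $|\widehat{S_0}(\textbf{m})|\le q^{-d/2}$ pointwise for $\textbf{m}\ne\textbf{0}$ — loads the error with a factor $1+q^{-1}$ rather than $1-q^{-1}$ and leaves an extra term of order $q^{-d/2}|E|^k$ coming from the $G^d$-part of $\widehat{S_0}(\textbf{0})$, giving only $\nu_k(0)\le(\tfrac13+q^{-1}+q^{-d/2})|E|^k$, which does not beat $\tfrac23|E|^k$ when $q=3$. Exploiting the cancellation inside $q\,S_0(\textbf{m})-1$, as above, is precisely what supplies the slack uniformly in $q$ and in the even dimension $d\ge 2$. (For $d=2$ one could alternatively just use the trivial bound $\nu_k(0)\le |S_0|\,|E|^{k-1}$ together with $|S_0|<2q$, but the Fourier computation handles every even $d$ at once.)
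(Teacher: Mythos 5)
Your argument is correct and follows essentially the same route as the paper: both start from the identity \eqref{R2.2} with $t=0$ combined with Proposition~\ref{P2.1} to extract the main term $q^{-1}|E|^k$, bound the remaining Gauss-sum term by $q^{d/2}|E|^{k-1}$ via $|G|=\sqrt q$ and \eqref{R2.1}, and then use $q\ge 3$ and $|E|\ge 3q^{d/2}$ to conclude $|E|^k-\nu_k(0)\ge \tfrac13|E|^k$. The only difference is that you split the inner character sum as $qS_0(\textbf{m})-1$ to squeeze out the factor $1-q^{-1}$, whereas the paper simply bounds $\bigl|\sum_{\ell\in\mathbb F_q^*}\chi(\|\textbf{m}\|\ell)\bigr|$ by $q$; that cruder bound already yields $\nu_k(0)\le q^{-1}|E|^k+q^{d/2}|E|^{k-1}\le \tfrac23|E|^k$, so the extra care is not needed.
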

\begin{proof}
Combining \eqref{R2.2} and Proposition \ref{P2.1},  we see that
\begin{align*}\nu_k(0)=&q^{dk}\sum_{\textbf{m}\in \mathbb F_q^d} \left(\overline{\widehat{E}(\textbf{m})}\right)^k
\left(q^{-1}\delta_0(\textbf{m})+q^{-d-1}G^d \sum_{\ell \in \mathbb F_q^*} \chi(\|\textbf{m}\| \ell)\right)\\
=& q^{dk-1} \left(\overline{\widehat{E}(0,\ldots,0)}\right)^k  + q^{dk-d-1} G^d \sum_{\textbf{m}\in \mathbb F_q^d} \left(\overline{\widehat{E}(\textbf{m})}\right)^k \left(   \sum_{\ell \in \mathbb F_q^*} \chi(\|\textbf{m}\| \ell)  \right).
 \end{align*}
Since $\widehat{E}(0,\ldots,0) = q^{-d}|E|,$  we have
\begin{equation}\label{R2.3}
\nu_k(0)=q^{-1}|E|^k +q^{dk-d-1} G^d \sum_{\textbf{m}\in \mathbb F_q^d} \left(\overline{\widehat{E}(\textbf{m})}\right)^k \left(   \sum_{\ell \in \mathbb F_q^*} \chi(\|\textbf{m}\| \ell)  \right).
\end{equation}
Since $\nu_k(0)$ is a nonnegative real number, it is clear that
\[
\nu_k(0) \le q^{-1}|E|^k + q^{dk-d} |G|^d \sum_{\textbf{m}\in \mathbb F_q^d} | \widehat{E}(\textbf{m})|^k.
\]
As $|G|=q^{1/2},$ it follows from \eqref{R2.1} that
\[
 \nu_k(0)\le q^{-1}|E|^k + q^{d/2} |E|^{k-1}.
\]
Since $q\ge 3,$ this clearly implies that if $|E|\ge 3 q^{d/2},$ then
\begin{align*}
|E|^k-\nu_k(0) \ge& |E|^k-q^{-1}|E|^k -q^{d/2} |E|^{k-1}
\\
\ge&\frac{|E|^k}{3} + \left( \frac{|E|^k}{3} -q^{d/2} |E|^{k-1} \right)\ge \frac{|E|^k}{3},
\end{align*}
and the statement of the lemma follows immediately.
\end{proof}

 We shall also use the following result.
\begin{lemma}\label{L2.6} Let $E\subset \mathbb F_q^d.$ Assume that $d\ge 2 $ is even and $k\ge 2$ is an integer. If $|E|\ge  q^{d/2},$ then we have
\[
q^{2dk-d}\left| \sum_{\textbf{m} \in S_0} \left(\widehat{E}(\textbf{m})\right)^k \right|^2 - \nu^2_k(0)\le  4q^{-1}|E|^{2k}.
\]
\end{lemma}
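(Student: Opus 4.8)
The plan is to start from the exact formula \eqref{R2.3} for $\nu_k(0)$ and separate out the contribution of the Fourier sphere $S_0$. Since $\sum_{\ell\in\mathbb F_q^*}\chi(\|\textbf{m}\|\ell)$ equals $q-1$ when $\textbf{m}\in S_0$ and equals $-1$ otherwise, we may write $\sum_{\ell\in\mathbb F_q^*}\chi(\|\textbf{m}\|\ell)=q\,S_0(\textbf{m})-1$, so that
\[
\sum_{\textbf{m}\in\mathbb F_q^d}\left(\overline{\widehat{E}(\textbf{m})}\right)^k\sum_{\ell\in\mathbb F_q^*}\chi(\|\textbf{m}\|\ell)
= q\sum_{\textbf{m}\in S_0}\left(\overline{\widehat{E}(\textbf{m})}\right)^k-\sum_{\textbf{m}\in\mathbb F_q^d}\left(\overline{\widehat{E}(\textbf{m})}\right)^k.
\]
The last full sum is evaluated by orthogonality of $\chi$: writing $N_0:=|\{(\textbf{x}_1,\dots,\textbf{x}_k)\in E^k:\textbf{x}_1+\cdots+\textbf{x}_k=0\}|$, one gets $\sum_{\textbf{m}}(\overline{\widehat{E}(\textbf{m})})^k=q^{d-dk}N_0$, and crucially $0\le N_0\le |E|^{k-1}$ since $\textbf{x}_k$ is determined by $\textbf{x}_1,\dots,\textbf{x}_{k-1}$. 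Substituting back into \eqref{R2.3} and writing $A:=\sum_{\textbf{m}\in S_0}(\overline{\widehat{E}(\textbf{m})})^k$ yields the identity
\[
\nu_k(0)=q^{-1}|E|^k+q^{dk-d}G^d A-q^{-1}G^d N_0 .
\]

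Next I would rearrange this as $q^{dk-d}G^d A=\nu_k(0)+w$, where $w:=q^{-1}\big(G^d N_0-|E|^k\big)$, and take squared moduli. Since $|G|=\sqrt q$ we have $|q^{dk-d}G^d A|^2=q^{2dk-d}|A|^2$, and since $\sum_{\textbf{m}\in S_0}(\widehat{E}(\textbf{m}))^k$ is the complex conjugate of $A$, the first term on the left side of the asserted inequality equals $q^{2dk-d}|A|^2=|\nu_k(0)+w|^2$. Expanding the square (and using that $\nu_k(0)$ is real) gives
\[
q^{2dk-d}\Big|\sum_{\textbf{m}\in S_0}\big(\widehat{E}(\textbf{m})\big)^k\Big|^2-\nu_k^2(0)=2\,\nu_k(0)\,\mathrm{Re}(w)+|w|^2 .
\]

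The decisive point is that $\mathrm{Re}(w)\le 0$. Indeed $\mathrm{Re}(w)=q^{-1}\big(N_0\,\mathrm{Re}(G^d)-|E|^k\big)$, and since $\mathrm{Re}(G^d)\le|G^d|=q^{d/2}$, $N_0\le|E|^{k-1}$, and $q^{d/2}\le|E|$ by hypothesis, we get $N_0\,\mathrm{Re}(G^d)\le|E|^{k-1}q^{d/2}\le|E|^k$, hence $\mathrm{Re}(w)\le0$. Because $\nu_k(0)\ge0$, the cross term is non-positive, so the displayed difference is at most $|w|^2$. Finally, by the triangle inequality and the same two estimates, $|w|=q^{-1}\big|G^d N_0-|E|^k\big|\le q^{-1}\big(q^{d/2}|E|^{k-1}+|E|^k\big)\le 2q^{-1}|E|^k$, whence $|w|^2\le 4q^{-2}|E|^{2k}\le 4q^{-1}|E|^{2k}$, which is the claim.

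The only genuine subtlety is the sign of the cross term: one must resist bounding $G^d$ by its absolute value at that step and instead exploit that $\mathrm{Re}(G^d)\le q^{d/2}$, which is exactly where the hypothesis $|E|\ge q^{d/2}$ is used (to absorb $q^{d/2}N_0$ into $|E|^k$). Everything else — the splitting over $S_0$, the orthogonality evaluation of the full Fourier sum, and the final triangle-inequality bound — is routine bookkeeping.
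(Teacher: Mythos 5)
Your proof is correct and follows essentially the same route as the paper: both isolate the $S_0$-contribution in the Gauss-sum formula \eqref{R2.3} for $\nu_k(0)$ via orthogonality of $\chi$ and then control the remaining term using $|G|=\sqrt q$ and the hypothesis $|E|\ge q^{d/2}$. The only (harmless) differences are that you evaluate $\sum_{\textbf{m}}\big(\overline{\widehat{E}(\textbf{m})}\big)^k$ exactly as $q^{d-dk}N_0$ with $N_0\le |E|^{k-1}$ where the paper invokes the moment bound \eqref{R2.1}, and that you exploit the sign of the cross term where the paper simply estimates $|A|^2-\nu_k^2(0)\le 2|A||B|$; your bookkeeping in fact yields the slightly sharper bound $4q^{-2}|E|^{2k}$.
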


\begin{proof} Observe from \eqref{R2.3} that we can write
\[
\nu_k(0)=q^{-1}|E|^k +q^{dk-d-1} G^d \sum_{\textbf{m}\in \mathbb F_q^d} \left(\overline{\widehat{E}(\textbf{m})}\right)^k \left( -1+  \sum_{\ell \in \mathbb F_q} \chi(\|\textbf{m}\| \ell)  \right).
\]
By the orthogonality relation of $\chi$,  it is easy to see that
\begin{align*}\nu_k(0)=&q^{dk-d}G^d \sum_{\textbf{m}\in S_0} \left(\overline{\widehat{E}(\textbf{m})}\right)^k
+\left( q^{-1}|E|^k-q^{dk-d-1} G^d \sum_{\textbf{m}\in \mathbb F_q^d} \left(\overline{\widehat{E}(\textbf{m})}\right)^k \right)\\
:=& A+B.
\end{align*}
Since $\nu_k(0)\ge 0,$ it follows that
\begin{align*} \nu^2_k(0)=&\nu_k(0) \overline{\nu_k(0)} =( A+B ) (\overline{A} + \overline{B})\\
=& q^{2dk-d} \left|\sum_{\textbf{m}\in S_0} \left( \widehat{E}(\textbf{m})\right)^k\right|^2 + A\overline{B} +\overline{A}B + |B|^2.
\end{align*}
This observation and the definition of $A$ and $B$ yield that
\begin{align*} &q^{2dk-d} \left|\sum_{\textbf{m}\in S_0} \left( \widehat{E}(\textbf{m})\right)^k\right|^2 - \nu^2_k(0) \leq 2|A||B|\\
& \le 2 \left( q^{dk-d/2} \sum_{\textbf{m}\in \mathbb F_q^d} |\widehat{E}(\textbf{m})|^k\right) \left( q^{-1} |E|^k + q^{dk-d/2 -1}\sum_{\textbf{m}\in \mathbb F_q^d} |\widehat{E}(\textbf{m})|^k\right) \\
&\le 2\left(q^{d/2 -1} |E|^{2k-1} +  q^{d-1} |E|^{2k-2}\right),
\end{align*}
where \eqref{R2.1} was applied to obtain the last line.  We complete the proof by observing that  if $|E|\ge  q^{d/2},$ then
\[
\max \left( q^{d/2 -1} |E|^{2k-1}, ~ q^{d-1} |E|^{2k-2} \right) \leq q^{-1}|E|^{2k}.
\]
\end{proof}

\section{Results on the restriction theorem for spheres}\label{S3}
In this section we collect lemmas which can be obtained by applying the restriction theorems for spheres in finite fields. To do this,  we begin by reviewing  the extension problem for spheres.  We denote by $(\mathbb F_q^d, d\textbf{x})$ the $d$-dimensional vector space over $\mathbb F_q$ endowed with the normalized counting measure ``$d\textbf{x}$".  On the other hand,  the dual space of $(\mathbb F_q^d, d\textbf{x})$  will be denoted by $(\mathbb F_q^d, d\textbf{m})$ which we endow  with the counting measure  ``$d\textbf{m}.$"  Notice that both spaces are isomorphic as an abstract group but different measures are given between them. For this reason, the norm of a function depends on its domain. For maximum clarity and ease of exposition of norms, we explicitly define the following norms as sums:  for $1\le s < \infty,$
\begin{align*}
&\|g\|_{L^s(\mathbb F_q^d, d\textbf{m})}^s = \sum_{\textbf{m} \in \mathbb F_q^d} |g(\textbf{m})|^s,\\
& \|f\|_{L^s(\mathbb F_q^d, d\textbf{x})}^s = q^{-d}\,\sum_{\textbf{x} \in \mathbb F_q^d} |f(\textbf{x})|^s.
\end{align*}
In addition, we define
\begin{equation*} \|g\|_{L^\infty(\mathbb F_q^d, d\textbf{m})} =\max_{\textbf{m} \in \mathbb F_q^d} |g(\textbf{m})|.
\end{equation*}

Next, we introduce the normalized surface measures on spheres in finite fields.
For $t\in \mathbb F_q^*,$  we consider a sphere $S_t \subset (\mathbb F_q^d, d\textbf{x}).$
For each $t\in \mathbb F_q^*,$ we endow the sphere $S_t$ with the normalized surface measure $d\sigma$. Thus, if $f: (\mathbb F_q^d, d\textbf{x}) \to \mathbb C,$ then we define
\begin{align*}
&\| f\|_{L^s(S_t,  d\sigma)}^s=\frac{1}{|S_t|} \sum_{\textbf{x}\in S_t} |f(\textbf{x})|^s\quad \mbox{for}~~ 1\le s<\infty,\\
& \| f\|_{L^\infty(S_t,  d\sigma)}= \max_{\textbf{x} \in S_t} |f(\textbf{x})|.\end{align*}
Also recall that if $f:(S_t, d\sigma) \to \mathbb C,$ then the inverse Fourier transform of  $fd\sigma$ is given by
\[
(fd\sigma)^{\vee}(\textbf{m})  =\frac{1}{|S_t|} \sum_{\textbf{x}\in S_t}f(\textbf{x}) \chi(\textbf{m}\cdot \textbf{x})\quad\mbox{for}~~\textbf{m}\in (\mathbb F_q^d, d\textbf{m}).
\]
Since $S_t=-S_t:=\{\textbf{x}\in \mathbb F_q^d: -\textbf{x}\in S_t\}$,  it follows from the definition of the normalized Fourier transform that
\begin{equation}\label{IFT} (d\sigma)^{\vee}(\textbf{m}) = \frac{q^d}{|S_t|} \widehat{S_t}(\textbf{m})\quad\mbox{for}~~\textbf{m}\in (\mathbb F_q^d, d\textbf{m}).\end{equation}

With the above notation,  the extension problem for the sphere $S_t$ is to determine $1\leq p, r\leq \infty$ such that for some $C>0$,
\begin{equation}\label{m1}
\| (fd\sigma)^\vee \|_{L^r(\mathbb F_q^d, d\textbf{m})} \le C \|f\|_{L^p(S_t, d\sigma)} \quad\mbox{for all} ~~
f: S_t \to \mathbb C,
\end{equation}
where the constant $C>0$ may depend on $p, r, d, S_t,$ but it must be  independent of the functions $f$ and the size of the underlying finite field $\mathbb F_q.$
By duality, this extension estimate is the same as the following restriction estimate (see \cite{ MT04,Ta04}) :
\begin{equation}\label{m2}
 \|\widetilde{g}\|_{L^{p^\prime}(S_t, d\sigma)}\leq C \|g\|_{L^{r^\prime}(\mathbb F_q^d, d\textbf{m})}
\quad\mbox{for all} ~~g: \mathbb F_q^d \to \mathbb C,
\end{equation}
where $\widetilde{g}$ is defined as in \eqref{gtile}, and 
$p^{\prime}$ and $r^{\prime}$  denote the H\"{o}lder conjugates of $p$ and $r,$ which mean that $1/p+1/p^{\prime}=1$ and $1/r+1/r^{\prime}=1.$ 
\begin{remark}In this paper, we will use $X \lesssim Y$ to mean that there exists $C>0$, independent of $q$ such that $X \leq CY$, and  we also write $Y\gtrsim X$ for $X \lesssim Y.$  We use $X\sim Y$ to indicate that $\lim_{q \to \infty} X/Y = 1$.  In addition, $X\lessapprox Y$ means that for every $\varepsilon >0$ there exists  $C_{\varepsilon}>0$ such that $ X\le C_{\varepsilon} q^{\epsilon} Y.$
\end{remark}

 By the definition of norms and Fourier transforms, the inequalities in  $\eqref{m1}$ and $\eqref{m2}$ are written in terms of the following sums, respectively:

\begin{equation*} \sum_{\textbf{m}\in \mathbb F_q^d} \left| \frac{1}{|S_t|} \sum_{\textbf{x}\in S_t} \chi(\textbf{m} \cdot \textbf{x}) f(\textbf{x}) \right|^r \leq C^r \frac{1}{|S_t|^{r/p}} \left(\sum_{\textbf{x} \in S_t} |f(\textbf{x})|^p \right)^{r/p}\\
\end{equation*}
and
\begin{equation}\label{m3} \frac{1}{|S_t|} \sum_{\textbf{x}\in S_t} \left| \sum_{\textbf{m}\in \mathbb F_q^d} \chi(-\textbf{m}\cdot \textbf{x}) g(\textbf{m}) \right|^{p'} \le C^{p'} \left(\sum_{\textbf{m}\in \mathbb F_q^d} |g(\textbf{m})|^{r'} \right)^{p'/r'}.
\end{equation}

In particular, in \eqref{m3} if we take $g(x)=E(x)$ for $E\subset \mathbb F_q^d,$ and $p'=k$, then we obtain that

\begin{equation*} \frac{1}{|S_t|} \sum_{\textbf{x} \in S_t} |q^d \widehat{E}(\textbf{x})|^k \lesssim |E|^{k/r'}.
\end{equation*}
Since $|S_t|\sim q^{d-1} $for $t\ne 0,$  if $t\ne 0$, then we can write
\begin{equation*} \sum_{\textbf{x}\in S_t} |\widehat{E}(\textbf{x})|^k \lesssim q^{d-dk-1} |E|^{k/r'}.
\end{equation*}

If we put $ \ell=r'$ and change the variable $\textbf{x}$ into $\textbf{m},$ it follows that
\begin{equation*} \sum_{\textbf{m}\in S_t} |\widehat{E}(\textbf{m})|^k \lesssim q^{d-dk-1} |E|^{k/\ell}.
\end{equation*}
In summary, we obtain the following result.

\begin{lemma}\label{kang1} Assume that the following restriction estimate holds for $1\le k, \ell < \infty$  
 \[\|\widetilde{g}\|_{L^{k}(S_t, d\sigma)}\lesssim \|g\|_{L^{\ell}(\mathbb F_q^d, d\textbf{m})}
\quad\mbox{for all} ~~g: \mathbb F_q^d \to \mathbb C,\, t\in \mathbb F_q^*. \]
Then  we have
\[\max_{t\in \mathbb F_q^*}\left(\sum_{\textbf{m}\in S_t} |\widehat{E}(\textbf{m})|^k \right)\lesssim q^{d-dk-1} |E|^{k/\ell}\quad \mbox{for all }~~E\subset \mathbb F_q^d.\]
\end{lemma}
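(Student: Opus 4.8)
The plan is to unwind the definitions and apply the hypothesized restriction estimate to a single test function, namely $g = E$, the indicator function of the set $E$, with the two exponents taken to be $k$ and $\ell$ exactly as in the statement. First I would record the two elementary identities that connect the measure-normalized quantities in the hypothesis to the plain sum in the conclusion. Comparing the definition \eqref{gtile} of $\widetilde{g}$ with that of the normalized Fourier transform shows that $\widetilde{E}(\textbf{x}) = q^{d}\,\widehat{E}(\textbf{x})$ for every $\textbf{x}\in\mathbb F_q^d$; and since $E$ is $\{0,1\}$-valued, $\|E\|_{L^{\ell}(\mathbb F_q^d, d\textbf{m})}^{\ell} = \sum_{\textbf{m}\in\mathbb F_q^d}|E(\textbf{m})|^{\ell} = |E|$, so $\|E\|_{L^{\ell}(\mathbb F_q^d, d\textbf{m})}^{k} = |E|^{k/\ell}$.

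Next I would substitute $g = E$ into the assumed inequality and raise both sides to the $k$-th power. The first identity turns the left-hand side into
\[
\|\widetilde{E}\|_{L^{k}(S_t, d\sigma)}^{k} = \frac{1}{|S_t|}\sum_{\textbf{x}\in S_t}\bigl|q^{d}\,\widehat{E}(\textbf{x})\bigr|^{k} = \frac{q^{dk}}{|S_t|}\sum_{\textbf{x}\in S_t}|\widehat{E}(\textbf{x})|^{k},
\]
while the second identity handles the right-hand side, giving $\dfrac{q^{dk}}{|S_t|}\sum_{\textbf{x}\in S_t}|\widehat{E}(\textbf{x})|^{k}\lesssim |E|^{k/\ell}$ for every $t\in\mathbb F_q^{*}$. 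Now I invoke the sphere-size estimate \eqref{SizeS}: because $t\neq 0$, we have $|S_t| = q^{d-1}(1+o(1))$, hence $q^{-dk}|S_t|\lesssim q^{d-dk-1}$ uniformly in $t\in\mathbb F_q^{*}$. Multiplying the displayed inequality through by $q^{-dk}|S_t|$ and using this bound yields
\[
\sum_{\textbf{x}\in S_t}|\widehat{E}(\textbf{x})|^{k}\lesssim q^{d-dk-1}\,|E|^{k/\ell}\qquad\text{for each }t\in\mathbb F_q^{*},
\]
with an implied constant depending only on $d$, $k$, $\ell$, and the constant in the hypothesized restriction estimate, and in particular independent of $t$. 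Relabeling $\textbf{x}$ as $\textbf{m}$ and taking the maximum over $t\in\mathbb F_q^{*}$ gives the claimed bound.

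There is no substantial obstacle here: the entire content of the lemma is the bookkeeping that converts the functional-analytic restriction inequality into the arithmetic quantity $\sum_{\textbf{m}\in S_t}|\widehat{E}(\textbf{m})|^{k}$ that will later be fed into Lemma \ref{L2.4}. The only points that require care are tracking the normalization factor $q^{d}$ relating $\widetilde{E}$ to $\widehat{E}$, observing that the $L^{\ell}$ norm of an indicator function collapses to a power of $|E|$, and noting that the asymptotic $|S_t|\sim q^{d-1}$ is available precisely because $t\neq 0$ --- which is exactly why both the hypothesis and the conclusion are phrased over $t\in\mathbb F_q^{*}$ rather than over all of $\mathbb F_q$.
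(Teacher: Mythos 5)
Your proof is correct and follows essentially the same route as the paper: the paper likewise specializes the dual (restriction) form of the estimate to $g=E$ with $p'=k$, uses $\widetilde{E}=q^{d}\widehat{E}$ and $\|E\|_{L^{\ell}}^{k}=|E|^{k/\ell}$, and then invokes $|S_t|\sim q^{d-1}$ for $t\neq 0$ to arrive at the stated bound. No issues.
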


In the finite field setting, the extension problem for various varieties was first posed by Mockenhaupt and Tao (\cite{MT04}).  They mainly obtained good results for paraboloids in lower dimensions. Their results have been recently improved (see, for example, \cite{IK09, Le13, Le14, LL10}).  The extension problem for spheres is more delicate than that of paraboloids, and it was studied by Iosevich and Koh.  In \cite{IKo08}, they obtained the sharp $L^2-L^4$ extension result for circles, which the authors of \cite{CEHIK09} applied to deduce the exponent $4/3$ for the Erd\H{o}s-Falconer distance problem in dimension two. Recall that if $d=2,$ then the exponent $4/3$ gives a much better result than the exponent $(d+1)/2$ which is optimal for odd dimensions. When $d\geq 3,$ the $L^2-L^{(2d+2)/(d-1)}$ extension result for spheres is also known in \cite{IKo08} and can be also applied to the Erd\H{o}s-Falconer distance problem but we can only obtain the exponent $(d+1)/2.$ \\

In \cite{IK10}, Iosevich and Koh  investigated the $L^p-L^4$ spherical extension problem, and they proved the following result which improves the previous work in \cite{IKo08}.
\begin{proposition}[\cite{IK10}, Theorem 1]\label{P3.1}
Let $d\geq 4$ be even. Then we have
\begin{equation}
\label{R3.1} \|(Ed\sigma)^\vee\|_{L^4(\mathbb E_q^d, d\textbf{m})} \lesssim \|E\|_{L^{(12d-8)/{(9d-12)}}(S_t, d\sigma)} \quad \mbox{for all} ~~E\subset S_t , ~t\neq 0,
\end{equation}
where we identify the set $E$ with the characteristic function of $E\subset S_t.$ 
In addition, using the pigeonhole principle, \eqref{R3.1} implies that
\begin{equation} \label{R3.2} \| (fd\sigma)^\vee\|_{L^4(\mathbb F_q^d, d\textbf{m})} \lessapprox  \|f\|_{L^{{(12d-8)}/{(9d-12)}}(S_t, d\sigma)} \quad \mbox{for all} ~~f: S_t \to \mathbb C,~t\neq 0.
\end{equation}
\end{proposition}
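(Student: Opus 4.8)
This is Theorem~1 of \cite{IK10}; here is how I would approach it. The plan is to convert the fourth power of the extension norm into a fourth moment of $\widehat{E}$, to exploit the hypothesis $E\subset S_t$ through a functional relation satisfied by $\widehat{E}$ (obtained from Proposition~\ref{P2.1}), to estimate the resulting nested exponential sums with the Gauss and Kloosterman bounds of Remark~\ref{Re2.3}, and finally to pass from characteristic functions to general $f$ by routine dyadic pigeonholing.

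First I would reduce \eqref{R3.1} to a single Fourier estimate. Since $E\subset S_t$ gives $(Ed\sigma)^{\vee}(\textbf{m})=\tfrac{q^d}{|S_t|}\,E^{\vee}(\textbf{m})$, expanding the fourth power and summing over $\textbf{m}$ by orthogonality of $\chi$ yields the exact identity
\[
\|(Ed\sigma)^{\vee}\|_{L^4(\mathbb F_q^d, d\textbf{m})}^4=\frac{q^{4d}}{|S_t|^4}\sum_{\textbf{m}\in\mathbb F_q^d}|\widehat{E}(\textbf{m})|^4
=\frac{q^{4d}}{|S_t|^4}\Big(\frac{|E|^4}{q^{4d}}+\sum_{\textbf{m}\neq 0}|\widehat{E}(\textbf{m})|^4\Big),
\]
where $q^{3d}\sum_{\textbf{m}}|\widehat{E}(\textbf{m})|^4$ is exactly the additive energy $\#\{(\textbf{x}_1,\textbf{x}_2,\textbf{x}_3,\textbf{x}_4)\in E^4:\textbf{x}_1+\textbf{x}_2=\textbf{x}_3+\textbf{x}_4\}$. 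The isolated $\textbf{m}=0$ term is harmless: it contributes $|E|^4/|S_t|^4$, which is $\le(|E|/|S_t|)^{4/p_0}=\|E\|_{L^{p_0}(S_t,d\sigma)}^4$ precisely when $|E|\le|S_t|$, i.e.\ always. Using \eqref{SizeS}, the estimate \eqref{R3.1} with $p_0=(12d-8)/(9d-12)$ is therefore equivalent to
\[
q^{3d}\sum_{\textbf{m}\neq 0}|\widehat{E}(\textbf{m})|^4\ \lesssim\ q^{\frac{3d-4}{3d-2}}\,|E|^{\frac{9d-12}{3d-2}}\qquad\text{for all }E\subset S_t,\ t\neq 0,
\]
which should be contrasted with the trivial bound $q^{3d}\sum_{\textbf{m}}|\widehat{E}(\textbf{m})|^4\le|E|^3$ coming from \eqref{R2.1} (equivalently, the classical $L^2$--$L^4$ estimate, the case $p_0=2$).

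The heart of the matter is to improve on this trivial bound by using $E\subset S_t$. Since then $E(\textbf{x})=E(\textbf{x})S_t(\textbf{x})$, the convolution identity for the Fourier transform gives $\widehat{E}=\widehat{E}*\widehat{S_t}$, and inserting Proposition~\ref{P2.1} produces the functional equation
\[
(1-q^{-1})\,\widehat{E}(\textbf{m})=q^{-d-1}G^{d}\sum_{\ell\in\mathbb F_q^*}\chi(t\ell)\sum_{\textbf{k}\in\mathbb F_q^d}\widehat{E}(\textbf{k})\,\chi\!\Big(\frac{\|\textbf{m}-\textbf{k}\|}{4\ell}\Big).
\]
Writing $\|\textbf{m}-\textbf{k}\|=\|\textbf{m}\|-2\textbf{m}\cdot\textbf{k}+\|\textbf{k}\|$ exhibits the inner sum as a Gaussian twist of a Fourier transform of $\widehat{E}$, and the $\ell$-sum is Kloosterman-type; the cancellation in both is invisible to the crude bound above. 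I would substitute this expression for $\widehat{E}(\textbf{m})$ into $\sum_{\textbf{m}\neq 0}|\widehat{E}(\textbf{m})|^4$ (for two or for all four factors, possibly after one further iteration to sharpen the exponent), carry out the $\textbf{m}$-summation --- a complete Gauss sum in $\textbf{m}$ that either collapses to a linear constraint on the $\textbf{k}_i/\ell_i$ or contributes a factor of size $q^{d/2}$ --- and then bound the residual sums over the $\ell_i$ and $\textbf{k}_i$ using $|G|=\sqrt q$, $|K(a,b)|\le 2\sqrt q$ for $ab\neq0$ (Remark~\ref{Re2.3}), Plancherel ($\sum_{\textbf{m}}|\widehat{E}(\textbf{m})|^2=|E|/q^d$), and \eqref{R2.1} to dispose of the leftover $\widehat{E}$-factors. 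Balancing the resulting pieces --- or, equivalently, interpolating the two endpoint estimates that fall out of the two natural ways of distributing the $\widehat{E}$-factors --- should land exactly on the exponent $p_0=(12d-8)/(9d-12)$, giving \eqref{R3.1}.

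Finally, \eqref{R3.1}$\Rightarrow$\eqref{R3.2}: given $f:S_t\to\mathbb C$, write $f=\sum_j f_j$ with $f_j$ supported on $E_j=\{\textbf{x}\in S_t:2^{j-1}<|f(\textbf{x})|\le 2^j\}$ and $|f_j|\le 2^jE_j$, apply \eqref{R3.1} to each $E_j$, sum by the triangle inequality in $L^4(\mathbb F_q^d, d\textbf{m})$ together with H\"older over the $O(\log q)$ relevant indices $j$, and absorb the logarithm into the $q^\varepsilon$ implicit in $\lessapprox$. I expect the main obstacle to be the exponential-sum step: after substituting the functional equation, carefully extracting and bookkeeping the cancellation in the nested Gauss/Kloosterman sums and verifying that the arithmetic really does yield the clean powers $q^{(3d-4)/(3d-2)}$ and $|E|^{(9d-12)/(3d-2)}$. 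This is precisely where the genuine gain over the trivial $L^2$--$L^4$ bound is created, and it is the delicate part of \cite{IK10}.
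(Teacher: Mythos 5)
The first thing to say is that the paper does not prove this proposition at all: it is imported verbatim as Theorem~1 of \cite{IK10}, and the only piece the authors actually argue is the remark that \eqref{R3.1} implies \eqref{R3.2} by a dyadic decomposition (citing Green's notes). Your treatment of that implication is correct and is essentially the standard argument the paper alludes to (modulo the routine step of first splitting a complex $f$ into nonnegative pieces so that the level-set decomposition can be applied). Your reduction of \eqref{R3.1} to the additive-energy bound
\[
\Lambda(E):=\#\{(\textbf{x}_1,\textbf{x}_2,\textbf{x}_3,\textbf{x}_4)\in E^4:\ \textbf{x}_1+\textbf{x}_2=\textbf{x}_3+\textbf{x}_4\}\ \lesssim\ q^{\frac{3d-4}{3d-2}}\,|E|^{\frac{9d-12}{3d-2}}
\]
is also arithmetically correct, as is the functional equation obtained from $\widehat{E}=\widehat{E}*\widehat{S_t}$ and Proposition~\ref{P2.1}.

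The genuine gap is that everything after that point --- which is to say, the entire content of \eqref{R3.1} --- is described rather than carried out. You write that one should substitute the functional equation into the fourth moment, evaluate the resulting Gauss and Kloosterman sums, and that balancing the pieces ``should land exactly on the exponent'' $(12d-8)/(9d-12)$. Nothing in the sketch forces that conclusion: the exponent is the theorem, and asserting that the bookkeeping will produce it is not a proof. Two concrete symptoms of the gap: (i) you never identify where the hypotheses $d$ even and $d\ge 4$ enter, yet they are essential (for even $d$ the Gauss-sum factor $G^d=\pm q^{d/2}$ is an exact constant and the exceptional set $\|\textbf{m}\|=0$ is a genuine cone that must be handled separately; for odd $d$ an extra quadratic character $\eta(\ell)$ appears and the argument changes); (ii) the published proof in \cite{IK10} is not organized around iterating the convolution identity at all, but around the geometric observation that $\textbf{x}_1+\textbf{x}_2=\textbf{x}_3+\textbf{x}_4$ on $S_t$ forces $\textbf{x}_1\cdot\textbf{x}_2=\textbf{x}_3\cdot\textbf{x}_4$, combined with counting points of $E$ on the resulting affine subspheres via Kloosterman-sum estimates. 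So your plan is a plausible opening, and more than the paper itself offers, but as it stands the decisive estimate is assumed rather than established; for the purposes of this paper the honest course is the one the authors take, namely to cite \cite{IK10} for \eqref{R3.1} and prove only the pigeonhole step.
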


\begin{remark} Theorem 1 in \cite{IK10} is actually the statement \eqref{R3.2}. In order to prove it,  the authors in \cite{IK10}  proved the statement \eqref{R3.1} and then concluded that the statement \eqref{R3.2} holds by just invoking the pigeonhole principle (a dyadic decomposition). It is well known that  \eqref{R3.1} implies \eqref{R3.2} (for example, see the proof of Theorem 17, \cite{Gr03}). In fact, if \eqref{R3.1} is true, then the pigeonhole principle yields that 
\begin{equation*}\| (fd\sigma)^\vee\|_{L^4(\mathbb F_q^d, d\textbf{m})} \lesssim \log{q} \, \|f\|_{L^{{(12d-8)}/{(9d-12)}}(S_t, d\sigma)} \quad \mbox{for all} ~~f: S_t \to \mathbb C,~t\neq 0. \end{equation*}
\end{remark}

Proposition \ref{P3.1} plays an important role in proving results for the cardinality of $\Delta_3(E).$  For the direct application to the problem, we shall use the following  lemma which is actually a corollary of  Proposition \ref{P3.1} .
\begin{lemma}\label{L3.2}
Let $d\geq 4$ be even. If $k> (12d-8)/(3d+4)=4-24/(3d+4),$  then we have
\[\max_{t\in \mathbb F_q^*} \left(\sum_{\textbf{v}\in S_t}\left|\widehat{E}(\textbf{v})\right|^k\right) \lesssim q^{d-dk-1} |E|^{\left( (3k-3)d+4k+2\right) /(3d+4)}
\]
\end{lemma}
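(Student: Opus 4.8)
The plan is to deduce Lemma \ref{L3.2} from Proposition \ref{P3.1} via interpolation with the trivial $L^\infty$ estimate, then feed the resulting restriction inequality into Lemma \ref{kang1}. First I would record two restriction estimates for $t\neq 0$: the nontrivial one coming from \eqref{R3.2}, namely $\|(fd\sigma)^\vee\|_{L^4(\mathbb F_q^d,d\mathbf m)} \lessapprox \|f\|_{L^{p_0}(S_t,d\sigma)}$ with $p_0 = (12d-8)/(9d-12)$, and the trivial endpoint estimate $\|(fd\sigma)^\vee\|_{L^\infty(\mathbb F_q^d,d\mathbf m)} \le \|f\|_{L^1(S_t,d\sigma)}$, which is immediate from the definition of $(fd\sigma)^\vee$ and the fact that $d\sigma$ is a probability measure. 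By real interpolation (or simply by H\"older on the sum defining the $L^k$ norm together with these two bounds), for each $r \in [4,\infty]$ one obtains an estimate $\|(fd\sigma)^\vee\|_{L^{r}(\mathbb F_q^d,d\mathbf m)} \lessapprox \|f\|_{L^{p_\theta}(S_t,d\sigma)}$, where the exponent $p_\theta$ is determined by the interpolation parameter. Dualizing via \eqref{m2} this becomes $\|\widetilde g\|_{L^{k}(S_t,d\sigma)} \lessapprox \|g\|_{L^{\ell}(\mathbb F_q^d,d\mathbf m)}$ for the appropriate pair $(k,\ell)$.

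Next I would carry out the bookkeeping to identify $\ell = \ell(k)$. Writing $k = r'$ (the H\"older conjugate that appears when we dualize, so $r = k' = k/(k-1)$) and $\ell = r'$ on the frequency side in the notation of Lemma \ref{kang1}, the interpolation identity linking the exponents should come out so that the restriction estimate $\|\widetilde g\|_{L^{k}(S_t,d\sigma)} \lessapprox \|g\|_{L^{\ell}(\mathbb F_q^d,d\mathbf m)}$ holds with
\[
\frac{1}{\ell} = 1 - \frac{(3k-3)d + 4k + 2}{k(3d+4)},
\]
equivalently $k/\ell = \big((3k-3)d + 4k + 2\big)/(3d+4)$, which is exactly the exponent of $|E|$ claimed in the lemma. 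The constraint $k > (12d-8)/(3d+4)$ is precisely the range in which the interpolation parameter between the $L^4$-endpoint (i.e. $p' = 4$, corresponding to $k = 4$ is not right — rather the dual exponent pairing means the $L^4$ extension bound sits at one end) and the trivial $L^1 \to L^\infty$ bound stays in $[0,1]$; I would verify that at $k = (12d-8)/(3d+4)$ one recovers exactly the Iosevich--Koh estimate \eqref{R3.2} and that larger $k$ corresponds to pushing toward the trivial endpoint, so the hypothesis $k > (12d-8)/(3d+4)$ is what is needed.

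Finally, I would apply Lemma \ref{kang1} with this pair $(k,\ell)$: since the restriction estimate $\|\widetilde g\|_{L^{k}(S_t,d\sigma)} \lessapprox \|g\|_{L^{\ell}(\mathbb F_q^d,d\mathbf m)}$ holds for all $t\in\mathbb F_q^*$, Lemma \ref{kang1} yields
\[
\max_{t\in\mathbb F_q^*}\Big(\sum_{\mathbf v\in S_t} |\widehat E(\mathbf v)|^k\Big) \lesssim q^{d - dk - 1}\, |E|^{k/\ell} = q^{d-dk-1}\, |E|^{\big((3k-3)d+4k+2\big)/(3d+4)},
\]
which is the assertion. (One subtlety: Lemma \ref{kang1} is stated with $\lesssim$ while interpolating \eqref{R3.2} carries the $\lessapprox$ loss of $q^\varepsilon$; since the lemma only claims $\lesssim$, I would either absorb the logarithmic factor — harmless for the applications in Theorem \ref{T1.4}, which itself carries an $\varepsilon$ — or note that the conclusion should honestly be stated with $\lessapprox$. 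In the write-up I would be careful to state it consistently.)

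The main obstacle I anticipate is purely bookkeeping: getting the interpolation exponents to land exactly on $\big((3k-3)d+4k+2\big)/(3d+4)$, since there are three reciprocal-exponent relations in play (the interpolation of the $L^{p_\theta}(S_t)$ index, the Hölder duality between $(k,\ell)$ on spheres and $(p',r')$ in \eqref{m2}, and the passage from the $L^4$-based estimate with source exponent $(12d-8)/(9d-12)$ to a general $L^k$). None of this is conceptually hard, but it is the step most prone to arithmetic slips, so I would double-check it by testing the endpoint $k = (12d-8)/(3d+4)$ against \eqref{R3.2} and the other endpoint $k \to \infty$ against the trivial bound $\sum_{\mathbf v \in S_t}|\widehat E(\mathbf v)|^\infty$-type estimate coming from $|\widehat E(\mathbf v)| \le q^{-d}|E|$.
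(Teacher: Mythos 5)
Your approach is essentially the paper's: interpolate the Iosevich--Koh $L^4$ spherical extension estimate against the trivial $L^1\to L^\infty$ bound, dualize, and feed the resulting restriction inequality into Lemma \ref{kang1}. Your exponent bookkeeping also lands in the right place: the interpolation parameter is $\theta=(12d-8)/(k(3d+4))$, the condition $\theta<1$ is exactly $k>(12d-8)/(3d+4)$, and the dual exponent satisfies $k/\ell=\left((3k-3)d+4k+2\right)/(3d+4)$, matching the claimed power of $|E|$ (your labeling ``$k=r'$ and $\ell=r'$'' is internally inconsistent --- you mean $k=p'$ and $\ell=r'$ --- but the final identity you write down is the correct one).

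The one substantive point where your write-up would fall short of the lemma as stated is the logarithmic loss. You interpolate from \eqref{R3.2}, which only holds with $\lessapprox$, and then propose either to absorb the loss or to restate the conclusion with $\lessapprox$, remarking that this is harmless for Theorem \ref{T1.4}. But Lemma \ref{L3.2} is the input to Theorem \ref{T1.3}, not Theorem \ref{T1.4}, and Theorem \ref{T1.3} has no $\varepsilon$ slack in its hypothesis, so a $\log q$ or $q^{\varepsilon}$ loss would genuinely weaken that theorem. The paper sidesteps this by interpolating the \emph{restricted} estimate \eqref{R3.1} --- the version for characteristic functions $E\subset S_t$, which holds with a clean $\lesssim$ --- via Proposition \ref{Grafakos}. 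Because the hypothesis $k>(12d-8)/(3d+4)$ is strict, one sits in the open interior of the interpolation segment (equivalently $1<k/(k-1)<(12d-8)/(9d-12)$), where Proposition \ref{Grafakos} upgrades the two restricted inputs to a strong-type estimate for all $f:S_t\to\mathbb C$ with a constant independent of $q$. With that one substitution your argument coincides with the paper's proof.
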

\begin{proof} By Lemma \ref{kang1}, it will be enough to show that
\begin{equation} \label{dk1} \|\widetilde{g}\|_{L^k(S_t, d\sigma)} \lesssim \|g\|_{L^{\alpha}(\mathbb F_q^d, d\textbf{m})}\quad \mbox{for all} ~~g: \mathbb F_q^d \to \mathbb C,~t\neq 0, \end{equation}
where $\alpha={k(3d+4)}/{((3k-3)d+4k+2)}.$
It is clear that
\begin{equation}\label{R3.3}\|(fd\sigma)^\vee\|_{L^\infty(\mathbb F_q^d, d\textbf{m})}\leq \|f\|_{L^1(S_t, d\sigma)}\quad\mbox{for all}~~f:S_t \to \mathbb C,~t\neq 0.\end{equation}
For any even integer $d\ge 4,$ recall from \eqref{R3.1} in Proposition \ref{P3.1} that
\begin{equation}\label{R3.4} \|(Ed\sigma)^\vee\|_{L^4(\mathbb F_q^d, d\textbf{m})} \lesssim \|E\|_{L^{{(12d-8)}/{(9d-12)}}(S_t, d\sigma)} \quad \mbox{for all} ~~E\subset S_t, ~t\neq 0.\end{equation}
We need the following proposition which is a direct consequence of Theorem 1.4.19 in \cite{Gr04}.
\begin{proposition}\label{Grafakos} 
Let $0<p_0\ne p_1\le \infty,$ and $0<r_0\ne r_1\le \infty.$ Assume that for some $M_0, M_1>0$, the following estimates hold:
\begin{align*}&\|(Ed\sigma)^\vee \|_{L^{r_0}(\mathbb F_q^d, d\textbf{m} )} \le M_0 \|E\|_{L^{p_0}(S_t, d\sigma)}\\
& \|(Ed\sigma)^\vee \|_{L^{r_1}(\mathbb F_q^d, d\textbf{m} )} \le M_1 \|E\|_{L^{p_1}(S_t, d\sigma)}\end{align*}
for all $E \subset S_t.$ Fix $0<\theta <1$ and let 
\begin{equation*} \frac{1}{p}= \frac{1-\theta}{p_0} +\frac{\theta}{p_1}, \quad \frac{1}{r}= \frac{1-\theta}{r_0} +\frac{\theta}{r_1}, \quad\mbox{and} \quad p\le r.
\end{equation*}
Then there exists a constant $M>0$ such that
\[\|(fd\sigma)^\vee \|_{L^{r}(\mathbb F_q^d, d\textbf{m} )} \le M \|f\|_{L^{p}(S_t, d\sigma)}\quad \mbox{for all} ~~f: S_t \to \mathbb C,
\]
where $M>0$ is independent of the functions $f$ and $q$, the size of the underlying finite field $\mathbb F_q.$
\end{proposition}

Since we assume that $k> (12d-8)/(3d+4)$ and $d\ge 4,$  it is easy to see that $1< k/(k-1)<(12d-8)/(9d-12).$ Therefore, applying Proposition \ref{Grafakos} with \eqref{R3.3} and \eqref{R3.4}, we see that
\[
\|(fd\sigma)^\vee\|_{L^{{k(3d+4)}/{(3d-2)}}(\mathbb F_q^d, d\textbf{m})}\lesssim \|f\|_{L^{{k}/{(k-1)}}(S_t, d\sigma)}\quad\mbox{for all}~~f:S_t \to \mathbb C,~t\neq 0.
\]
By duality\footnote{This means that the inequality \eqref{m1} is equivalent to the inequality \eqref{m2}}, the statement \eqref{dk1} follows immediately and we complete the proof of Lemma \ref{L3.2}.
\end{proof}

Observe that the hypotheses of Lemma \ref{L3.2} are satisfied if $k\ge 4$ and $d\ge 4$ is even  or if $k=3$ and $d=4$ or $6.$   However, in the case when $k=3$ and $d\ge 8 $ is even,  it is clear that Lemma \ref{L3.2} is not applicable.  In this case, we shall alternatively use the following result.
\begin{lemma}\label{L3.3}
Let $d\ge 8$ be an even integer. If $E\subset \mathbb F_q^d$ and $|E|\ge q^{(d-1)/2},$ then we have

\[\max_{t\in \mathbb F_q^*} \left(\sum_{\textbf{v}\in S_t}\left|\widehat{E}(\textbf{v})\right|^3\right) \lessapprox q^{(-27d^2+75d+12)/(12d-32)} |E|^{(15d-46)/(6d-16)}.\]
\end{lemma}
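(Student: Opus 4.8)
The plan is to estimate $\sum_{\textbf{v}\in S_t}|\widehat{E}(\textbf{v})|^3$ for each $t\neq 0$ by H\"older's inequality, distributing the exponent $3$ between the $L^2$ norm and an $L^{k_0}$ restriction norm of $\widehat{E}$ on $S_t$, where $k_0=(12d-8)/(3d+4)$. Two ingredients are needed. First, from Proposition \ref{P3.1} (specifically \eqref{R3.2}) and Lemma \ref{kang1} applied with $k=k_0$ and $\ell=4/3$ (the H\"older conjugates of $(12d-8)/(9d-12)$ and $4$), one gets
\[
\max_{t\in\mathbb F_q^*}\sum_{\textbf{v}\in S_t}|\widehat{E}(\textbf{v})|^{k_0}\lessapprox q^{\,d-dk_0-1}\,|E|^{\,3k_0/4}.
\]
Second — and this is where the hypothesis $|E|\ge q^{(d-1)/2}$ enters — I will show that for $t\neq0$,
\[
\sum_{\textbf{v}\in S_t}|\widehat{E}(\textbf{v})|^{2}\lesssim q^{-(3d+1)/2}\,|E|^{2}.
\]
Since $d\ge 8$ gives $3<k_0<4$, writing $3=\lambda k_0+2(1-\lambda)$ with $\lambda=1/(k_0-2)=(3d+4)/(6d-16)\in(0,1)$ and applying H\"older yields
\[
\sum_{\textbf{v}\in S_t}|\widehat{E}(\textbf{v})|^{3}\le\Big(\sum_{\textbf{v}\in S_t}|\widehat{E}(\textbf{v})|^{k_0}\Big)^{\lambda}\Big(\sum_{\textbf{v}\in S_t}|\widehat{E}(\textbf{v})|^{2}\Big)^{1-\lambda}.
\]
Inserting the two bounds and simplifying, the $|E|$-exponent becomes $\lambda\cdot\tfrac{3k_0}{4}+2(1-\lambda)=\tfrac{15d-46}{6d-16}$ and the $q$-exponent becomes $\lambda(d-dk_0-1)-(1-\lambda)\tfrac{3d+1}{2}=\tfrac{-27d^2+75d+12}{12d-32}$, which is precisely the asserted estimate; taking the maximum over $t\in\mathbb F_q^*$ then finishes the proof.

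It remains to describe the $L^2$ bound, which is the only point of substance. From $\widehat{E}(\textbf{v})=q^{-d}\sum_{\textbf{x}\in E}\chi(-\textbf{x}\cdot\textbf{v})$, the orthogonality of $\chi$, and $S_t=-S_t$, one obtains the identity $\sum_{\textbf{v}\in S_t}|\widehat{E}(\textbf{v})|^2=q^{-d}\sum_{\textbf{x},\textbf{y}\in E}\widehat{S_t}(\textbf{x}-\textbf{y})$. By Proposition \ref{P2.1}, $\widehat{S_t}(\textbf{0})=q^{-1}-q^{-d-1}G^d$, so $|\widehat{S_t}(\textbf{0})|\lesssim q^{-1}$; and for $\textbf{u}\neq\textbf{0}$ the inner sum over $\ell$ in Proposition \ref{P2.1} is either the Kloosterman sum $K(t,\|\textbf{u}\|/4)$ (when $\|\textbf{u}\|\neq0$) or $-1$ (when $\|\textbf{u}\|=0$), so that $|\widehat{S_t}(\textbf{u})|\lesssim q^{-d-1}|G|^d q^{1/2}= q^{-(d+1)/2}$ by the Gauss and Kloosterman sum estimates in Remark \ref{Re2.3}. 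Splitting the diagonal from the off-diagonal terms therefore gives
\[
\sum_{\textbf{v}\in S_t}|\widehat{E}(\textbf{v})|^2\lesssim q^{-d-1}|E|+q^{-(3d+1)/2}|E|^2,
\]
and when $|E|\ge q^{(d-1)/2}$ the first term is at most $q^{-d-1}|E|\cdot\big(|E|\,q^{-(d-1)/2}\big)=q^{-(3d+1)/2}|E|^2$, which yields the claimed $L^2$ estimate.

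Thus the harmonic analysis is entirely contained in Propositions \ref{P2.1} and \ref{P3.1} together with Lemma \ref{kang1}. The only genuinely delicate points are, first, recognizing that the clean $L^2$ bound $q^{-(3d+1)/2}|E|^2$ holds exactly under the hypothesis $|E|\ge q^{(d-1)/2}$ (Plancherel alone would instead require $|E|\ge q^{(d+1)/2}$), and second, carrying out the exponent bookkeeping in the H\"older step — which is legitimate only because $k_0>3$, an inequality that holds precisely for even $d\ge 8$.
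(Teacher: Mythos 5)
Your proof is correct and follows essentially the same route as the paper: the same $L^2$ restriction bound for $|E|\ge q^{(d-1)/2}$ proved via the diagonal/off-diagonal split and the Gauss--Kloosterman estimates for $\widehat{S_t}$, the same dual form of the $L^4$ extension estimate of Proposition \ref{P3.1} giving the $L^{(12d-8)/(3d+4)}$ input, and the same H\"older interpolation (your weight $\lambda$ is just a reparametrization of the paper's $\theta$). The only cosmetic difference is that you work directly with the unnormalized sums $\sum_{\textbf{v}\in S_t}|\widehat{E}(\textbf{v})|^k$ via Lemma \ref{kang1} instead of the normalized $L^p(S_t,d\sigma)$ norms of $\widetilde{E}$.
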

\begin{proof}
Since $|S_t|\sim q^{d-1}$ for even $d\ge 8,$ and $\widehat{E}(\textbf{v}) =q^{-d}\widetilde{E}(\textbf{v}),$ it suffices to show from the definition of norms that 
if $E\subset \mathbb F_q^d$ and $|E|\ge q^{(d-1)/2},$ then 
\begin{equation}\label{stupidestimate}
 \|\widetilde{E}\|_{L^3(S_t, d\sigma)} \lessapprox  q^{{(-3d^2+23d-20)}/{(36d-96)}} |E|^{{(15d-46)}/{(18d-48)}}\quad\mbox{for all}~~t\neq 0 .
 \end{equation}
 Let us assume for a moment that if $t\in \mathbb F_q^*,$ then
\begin{equation}\label{R3.5}
\|\widetilde{E}\|_{L^2(S_t, d\sigma)} \lesssim q^{{(-d+1)}/{4}} |E| \quad\mbox{for all} ~~E\subset \mathbb F_q^d \quad\mbox{with}~~|E|\geq q^{{(d-1)}/{2}}.
\end{equation}
By duality, \eqref{R3.2} in Proposition \ref{P3.1} implies that
\[
 \|\widetilde{g}\|_{L^{{(12d-8)}/{(3d+4)}}(S_t, d\sigma)} \lessapprox \|g\|_{L^{{4}/{3}}(\mathbb F_q^d, d\textbf{m})}\quad \mbox{for all} ~~g: \mathbb F_q^d \to \mathbb C,~t\neq 0.
 \]
Taking $g$ as a characteristic function on $E\subset \mathbb F_q^d,$  we obtain that
\begin{equation}\label{R3.6}
\|\widetilde{E}\|_{L^{{(12d-8)}/{(3d+4)}}(S_t, d\sigma)} \lessapprox \|E\|_{L^{{4}/{3}}(\mathbb F_q^d, d\textbf{m})}=|E|^{{3}/{4}}\quad \mbox{for all} ~~E\subset \mathbb F_q^d, ~t\neq 0.
\end{equation}
Since $2<3< (12d-8)/(3d+4)$ for $d\ge 8$,  we are able to interpolate \eqref{R3.5} and \eqref{R3.6} for $E\subset \mathbb F_q^d$ with $|E|\ge q^{(d-1)/2}$ so that 
the inequality \eqref{stupidestimate} will be established. For the readers' convenience, we shall show how to deduce the inequality \eqref{stupidestimate} from inequalities \eqref{R3.5} and \eqref{R3.6}. Let $0 < \theta=(6d-4)/(9d-24) <1 $ for even $d\ge 8.$ Observe that 
\begin{equation}\label{HC} 
\frac{1}{3}= \frac{1-\theta}{2} +\frac{(3d+4)\theta}{12d-8}.
\end{equation}
By  H\"{o}lder's inequality (see \cite{Jo}) and the definition of norms, it follows 
\begin{align*}\|\widetilde{E}\|_{L^3(S_t, d\sigma)} 
&=\|\widetilde{E}^{(1-\theta)}\,\widetilde{E}^\theta\|_{L^3(S_t, d\sigma)}\\
&\le \|\widetilde{E}^{(1-\theta)}\|_{L^{2/(1-\theta)}(S_t, d\sigma)}\,
\|\widetilde{E}^\theta\|_{L^{(12d-8)/[(3d+4)\theta]}(S_t, d\sigma)}\\
&= \|\widetilde{E}\|_{L^2(S_t, d\sigma)}^{1-\theta}\, \|\widetilde{E}\|_{L^{(12d-8)/(3d+4)}(S_t, d\sigma)}^\theta  
\end{align*}
From \eqref{R3.5}, \eqref{R3.6}, and the definition of $\theta$, we conclude that
\begin{align*}\|\widetilde{E}\|_{L^3(S_t, d\sigma)} &\lessapprox 
\left(q^{{(-d+1)}/{4}} |E|\right)^{1-\theta}  |E|^{ 3\theta/4} \\
&=q^{{(-3d^2+23d-20)}/{(36d-96)}} |E|^{{(15d-46)}/{(18d-48)}}.
\end{align*}

To complete the proof of Lemma \ref{L3.3}, it therefore remains to prove
\eqref{R3.5}. Now we prove \eqref{R3.5}. 
Since $|S_t|\sim q^{d-1},$ by the definition of norms, the proof of \eqref{R3.5} amounts to showing that if $t\in \mathbb F_q^*$ and $E\subset \mathbb F_q^d$ with $|E|\ge q^{\frac{d-1}{2}},$ then
\begin{equation}\label{realaim}
\sum_{\textbf{x} \in S_t} |\widetilde{E}(\textbf{x})|^2 \lesssim q^{\frac{d-1}{2}} |E|^2.
\end{equation}

It follows from the definition of the Fourier transforms that 
\begin{align*} \sum_{\textbf{x} \in S_t} |\widetilde{E}(\textbf{x})|^2 &= \sum_{\textbf{x} \in S_t} \sum_{ \textbf{m}, \textbf{m}' \in E} \chi(-\textbf{x} \cdot (\textbf{m} -\textbf{m}')) =\sum_{ \textbf{m}, \textbf{m}'\in E} q^d \widehat{S_t}(\textbf{m} -\textbf{m}')\\ 
&= q^d |E| \widehat{S_t}(0, \ldots, 0) + \sum_{\textbf{m}, \textbf{m}' \in E: \textbf{m}\ne \textbf{m}'} q^d \widehat{S_t}(\textbf{m} -\textbf{m}')\\
&\le |E||S_t| +  \left(\max_{\textbf{n} \in \mathbb F_q^d\setminus \{(0, \ldots, 0)\}} |\widehat{S_t}(\textbf{n})| \right) \sum_{\textbf{m}, \textbf{m}' \in E:\textbf{m}\ne \textbf{m}'} q^d\\
&\lesssim |E|q^{d-1} + |E|^2 q^d \left(\max_{\textbf{n} \in \mathbb F_q^d\setminus \{(0, \ldots, 0)\}} |\widehat{S_t}(\textbf{n})| \right).\end{align*}
Now, observe from Proposition \ref{P2.1} that if $t\ne 0$, then 
\[\left(\max_{\textbf{n} \in \mathbb F_q^d\setminus \{(0, \ldots, 0)\}} |\widehat{S_t}(\textbf{n})| \right) \lesssim q^{-\frac{d+1}{2}}.\]
Thus, we conclude that
\begin{equation*}\sum_{\textbf{x} \in S_t} |\widetilde{E}(\textbf{x})|^2 \lesssim |E|q^{d-1} + q^{\frac{d-1}{2}} |E|^2 \lesssim q^{\frac{d-1}{2}} |E|^2, \end{equation*}
 where the last inequality follows by our assumption that $|E|\ge q^{\frac{d-1}{2}}.$

\end{proof}
\section{Proofs of main theorems (Theorems \ref{T1.3} and \ref{T1.4})}\label{S4}
We begin by deriving the formula for a lower bound of $|\Delta_k(E)|.$
Let $E\subset \mathbb F_q^d$ and let $k\geq 2$ be an integer.
For $t\in \mathbb F_q,$  recall that  the counting function $\nu_k(t)$ is defined by
\[
\nu_k(t)=|\{(\textbf{x}_1, \textbf{x}_2, \ldots, \textbf{x}_k)\in E^k: \|\textbf{x}_1+ \textbf{x}_2+\cdots+ \textbf{x}_k\|=t\}|.
\]
Also recall that the $k$-resultant magnitude set $\Delta_k(E)$ is given by
\[
\Delta_k(E)=\{ \|\textbf{x}_1+ \textbf{x}_2 + \cdots + \textbf{x}_k\| \in \mathbb F_q : \textbf{x}_s \in E,  s=1,2, \ldots, k\}.
\]
Notice that  $ \nu_k(t) \neq 0 \iff t\in \Delta_k(E).$ It is clear that
\[
 |E|^k-\nu_k(0) =\sum_{t\in \mathbb F_q^*} \nu_k(t).
 \]
Squaring both sizes and using the Cauchy-Schwarz inequality, we see that
\[
 ( |E|^k-\nu_k(0))^2 \leq |\Delta_k(E)| \sum_{t\in \mathbb F_q^*} \nu^2_k(t).
 \]
Namely, we obtain that
\begin{equation}\label{R4.1}
|\Delta_k(E)|\ge \frac{ ( |E|^k-\nu_k(0))^2}{\sum_{t\in \mathbb F_q^*} \nu^2_k(t)}.
\end{equation}

\begin{lemma}\label{L4.1} Let $E\subset \mathbb F_q^d.$  Suppose that $d\geq 2$ is even and $k\ge 2$ is an integer.
If $|E|\ge 3q^{d/2},$ then we have
\[
 |\Delta_k(E)| \gtrsim \min \left(q,~~\frac{|E|^{k+1}}{ q^{dk} \max_{r\in \mathbb F_q^*} \left(\sum_{\textbf{v} \in S_r} |\widehat{E}(\textbf{v})|^{k}\right)  }\right) . \]
\end{lemma}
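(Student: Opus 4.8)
\textbf{Proposed proof of Lemma \ref{L4.1}.} The plan is to feed the lower bound \eqref{R4.1} with a lower bound for the numerator and an upper bound for the denominator. The numerator is handled at once: since $|E|\ge 3q^{d/2}$, Lemma \ref{L2.5} gives $(|E|^k-\nu_k(0))^2\ge |E|^{2k}/9$. Thus the whole task reduces to showing
\[
\sum_{t\in\mathbb F_q^*}\nu_k^2(t)\ \lesssim\ q^{-1}|E|^{2k}+q^{dk}|E|^{k-1}\,M,\qquad M:=\max_{r\in\mathbb F_q^*}\Big(\sum_{\textbf{v}\in S_r}|\widehat{E}(\textbf{v})|^k\Big).
\]

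To obtain this, I would write $\sum_{t\in\mathbb F_q^*}\nu_k^2(t)=\sum_{t\in\mathbb F_q}\nu_k^2(t)-\nu_k^2(0)$, apply Lemma \ref{L2.4} to the full sum, and split the resulting spherical term $q^{2dk-d}\sum_{r\in\mathbb F_q}\big|\sum_{\textbf{v}\in S_r}(\widehat{E}(\textbf{v}))^k\big|^2$ into the contribution of $r=0$ and the contribution of $r\ne 0$. The $r=0$ piece, together with the $-\nu_k^2(0)$, is precisely what Lemma \ref{L2.6} controls: it bounds $q^{2dk-d}\big|\sum_{\textbf{v}\in S_0}(\widehat{E}(\textbf{v}))^k\big|^2-\nu_k^2(0)$ by $4q^{-1}|E|^{2k}$, and the hypothesis $|E|\ge q^{d/2}$ there is implied by $|E|\ge 3q^{d/2}$. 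For the terms with $r\ne 0$ I would estimate $\big|\sum_{\textbf{v}\in S_r}(\widehat{E}(\textbf{v}))^k\big|^2\le\big(\sum_{\textbf{v}\in S_r}|\widehat{E}(\textbf{v})|^k\big)^2\le M\sum_{\textbf{v}\in S_r}|\widehat{E}(\textbf{v})|^k$, then sum over $r\in\mathbb F_q^*$ to get at most $M\sum_{\textbf{v}\in\mathbb F_q^d}|\widehat{E}(\textbf{v})|^k$, which by \eqref{R2.1} is at most $M|E|^{k-1}q^{d-dk}$. Multiplying by $q^{2dk-d}$ yields $q^{dk}M|E|^{k-1}$. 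Collecting the three pieces gives $\sum_{t\in\mathbb F_q^*}\nu_k^2(t)\le 5q^{-1}|E|^{2k}+q^{dk}M|E|^{k-1}$.

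Plugging the numerator and denominator bounds into \eqref{R4.1} and using the elementary fact that $\frac{X}{A+B}\ge\frac12\min(X/A,\,X/B)$ for positive $A,B,X$ (with $X=|E|^{2k}/9$, $A=5q^{-1}|E|^{2k}$, $B=q^{dk}M|E|^{k-1}$) produces
\[
|\Delta_k(E)|\ \gtrsim\ \min\Big(q,\ \frac{|E|^{k+1}}{q^{dk}M}\Big),
\]
which is exactly the asserted inequality. I do not expect a genuine obstacle here: the argument is an organized bookkeeping of Lemmas \ref{L2.4}, \ref{L2.5}, \ref{L2.6} and \eqref{R2.1}. The only point that needs care is the cancellation of the $r=0$ spherical term against $\nu_k^2(0)$ via Lemma \ref{L2.6}, and keeping the powers of $q$ straight when \eqref{R2.1} is invoked.
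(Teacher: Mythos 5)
Your proposal is correct and follows essentially the same route as the paper: lower-bound the numerator of \eqref{R4.1} via Lemma \ref{L2.5}, apply Lemma \ref{L2.4} to $\sum_t \nu_k^2(t)$, cancel the $r=0$ spherical term against $\nu_k^2(0)$ using Lemma \ref{L2.6}, and bound the $r\ne 0$ contribution by pulling out the maximum over $r\in\mathbb F_q^*$ and invoking \eqref{R2.1}. The bookkeeping of the powers of $q$ and the final $\min$ reduction all match the paper's argument.
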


\begin{proof}
First, we find an upper bound for $\sum_{t\in \mathbb F_q^*} \nu^2_k(E).$ Write
\[
\sum_{t\in \mathbb F_q^*} \nu^2_k(t)=\left(\sum_{t\in \mathbb F_q} \nu^2_k(t)\right)- \nu^2_k(0).
\]
From Lemma \ref{L2.4} and Lemma \ref{L2.6},  we see that
\begin{align*}\sum_{t\in \mathbb F_q^*} \nu^2_k(E) 
\leq& \, q^{-1}|E|^{2k} + q^{2dk-d} \sum_{r\in \mathbb F_q} \left| \sum_{\textbf{v}\in S_r}\, \left(\widehat{E}(\textbf{v})\right)^k \right|^2 - \nu^2_k(0)\\
\leq&\,  5q^{-1}|E|^{2k} + q^{2dk-d} \sum_{r\in \mathbb F_q^*} \left| \sum_{\textbf{v}\in S_r} \left(\widehat{E}(\textbf{v})\right)^k \right|^2\\
\lesssim& \,q^{-1}|E|^{2k} + q^{2dk-d} \sum_{r\in \mathbb F_q^*} \left(\sum_{\textbf{v}\in S_r} \left|\widehat{E}(\textbf{v})\right|^k\right)^2. \end{align*}
Since $S_i$ and $S_j$ are disjoint for $i\ne j$, and $\displaystyle \bigcup_{r\in \mathbb F_q} S_r = \mathbb F_q^d,$ it follows that
\[ \sum_{t\in \mathbb F_q^*} \nu^2_k(E) \lesssim q^{-1}|E|^{2k} + q^{2dk-d} \left[\max_{r\in \mathbb F_q^*} \left(\sum_{\textbf{v}\in S_r}\left|\widehat{E}(\textbf{v})\right|^k\right)\right] \sum_{\textbf{v} \in \mathbb F_q^d}
\left|\widehat{E}(\textbf{v})\right|^k
\]
Now, using \eqref{R2.1},  we obtain that
\begin{equation}\label{R4.2} \sum_{t\in \mathbb F_q^*} \nu^2_k(E) \lesssim q^{-1}|E|^{2k} + q^{dk} |E|^{k-1}\left[\max_{r\in \mathbb F_q^*} \left(\sum_{\textbf{v}\in S_r}\left|\widehat{E}(\textbf{v})\right|^k\right)\right].
\end{equation}

Since  it follows from Lemma \ref{L2.5} that $\left(|E|^k-\nu_k(0)\right)^2 \geq \dfrac{|E|^{2k}}{9},$
combining \eqref{R4.1} with \eqref{R4.2} yields that
\[
|\Delta_k(E)| \gtrsim \frac{|E|^{2k}}{q^{-1}|E|^{2k} + q^{dk} |E|^{k-1}\left[\max_{r\in \mathbb F_q^*} \left(\sum_{\textbf{v}\in S_r}\left|\widehat{E}(\textbf{v})\right|^k\right)\right] }.
\]
This implies the conclusion of Lemma \ref{L4.1} and completes the proof.
\end{proof}

We are ready to prove our main results.

\subsection{ Proof of  Theorem \ref{T1.3}}
In this subsection, we restate Theorem \ref{T1.3} and provide a complete proof.  The statement of Theorem \ref{T1.3} will be a direct consequence from Lemma \ref{L4.1} and Lemma \ref{L3.2}.\\

{\it \noindent{\bf Theorem \ref{T1.3}.}  Let $E\subset \mathbb F_q^d.$ Suppose that $C>1$ is a sufficiently large constant independent of $q.$\\
\noindent{\it (1)} If  $d=4$ or $6,$ and
 $|E|\geq C q^{\frac{d+1}{2}-\frac{1}{6d+2}},$   then $|\Delta_3(E)|\gtrsim q.$\\
\noindent{\it (2)} If $d\ge 8$ is even and $|E|\geq C q^{\frac{d+1}{2}-\frac{1}{6d+2}},$ then $|\Delta_4(E)|\gtrsim q.$}
\begin{proof} We shall prove the statements {\it (1)} and {\it (2)} of Theorem 1.3 at one time.
To the end, notice that if we take $k=3$ for $d=4$ or $6,$ or if we choose  $k=4$ for $d\ge 8$ even, then $k> (12d-8)/(3d+4)$ which is the hypothesis of Lemma \ref{L3.2}.
In either case, we can therefore use the conclusion of Lemma \ref{L3.2}.
Thus, combining Lemma \ref{L4.1} and Lemma \ref{L3.2} yields that
\begin{equation} |\Delta_k(E)| \gtrsim \min \left(q,~~\frac{|E|^{k+1}}{ q^{d-1} |E|^{((3k-3)d+4k+2)/(3d+4)}} \right).
\end{equation}

By a direct computation,  this implies that there exists a large constant $C>1$ such that  if $|E|\ge C q^{(3d^2+4d)/(6d+2)} = Cq^{\frac{d+1}{2} - \frac{1}{6d+2}},$   then
$|\Delta_k(E)|\gtrsim q.$ Thus, the proof is complete.
\end{proof}

\subsection{ Proof of  Theorem \ref{T1.4}}
The proof of Theorem \ref{T1.4} can be completed by applying Lemma \ref{L4.1} and Lemma \ref{L3.3}.
Here, we restate Theorem \ref{T1.4} and provide a complete proof.\\

\noindent{\it {\bf Theorem \ref{T1.4}}   Suppose that $d\geq 8$ is even and $E\subset \mathbb F_q^d.$
Then given $\varepsilon>0,$ there exists $C_{\varepsilon}>0$ such that
if $|E|\ge C_{\varepsilon}  q^{\frac{d+1}{2} - \frac{1}{9d -18} + \varepsilon}  ,$ then $|\Delta_3(E)|\gtrsim q.$}

\begin{proof} Suppose that $d\ge 8$ is even and $E\subset \mathbb F_q^d$ with $|E|\ge 3 q^{d/2}.$    Then Lemma \ref{L4.1} with $k=3$ yields
\begin{equation}\label{R4.4} |\Delta_3(E)| \gtrsim \min \left(q,~~\frac{|E|^4}{ q^{3d} \max_{t\in \mathbb F_q^*} \left(\sum_{\textbf{v} \in S_t} |\widehat{E}(\textbf{v})|^{3}\right)} \right). 
\end{equation}
Recall from  Lemma \ref{L3.3} that
\[\max_{t\in \mathbb F_q^*} \left(\sum_{\textbf{v}\in S_t}\left|\widehat{E}(\textbf{v})\right|^3\right) \lessapprox q^{(-27d^2+75d+12)/(12d-32)} |E|^{(15d-46)/(6d-16)}.\]

Given $\varepsilon >0,$ let $\delta=\varepsilon (9d-18)/(6d-16)>0.$  Choose $C_\delta>0$ such that
\[\max_{t\in \mathbb F_q^*} \left(\sum_{\textbf{v}\in S_t}\left|\widehat{E}(\textbf{v})\right|^3\right) \leq C_{\delta} q^\delta q^{(-27d^2+75d+12)/(12d-32)} |E|^{(15d-46)/(6d-16)}.\]

It follows from this inequality and \eqref{R4.4} that if $ |E|\ge 3 q^{d/2}$, then
\[
|\Delta_3(E)| \gtrsim \min \left(q,~~\frac{|E|^{4}}{ q^{3d}C_{\delta} q^\delta q^{(-27d^2+75d+12)/(12d-32)} |E|^{(15d-46)/(6d-16)}} \right).
 \]
We may assume that $C_\delta>0$ is a sufficiently large constant. Thus, a direction calculation shows that if
\[
|E|\geq C_\delta^{(6d-16)/(9d-18)} q^{\delta (6d-16)/(9d-18)} q^{(9d^2-9d-20)/(18d-36)},
\]
then we have $|\Delta_3(E)|\gtrsim q.$ Letting $C_\epsilon=C_\delta^{(6d-16)/(9d-18)},$ we conclude that if
\[
|E|\ge C_\varepsilon q^{\varepsilon} q^{(9d^2-9d-20)/(18d-36)} =C_\varepsilon q^{\frac{d+1}{2} - \frac{1}{9d-18} + \varepsilon},
\]
then  $|\Delta_3(E)|\gtrsim q.$ This completes the proof.

\end{proof}

{\bf Acknowledgement :} The authors would like to thank the referee for his/her valuable comments which enabled us to complete the final version of  this paper.

\end{document}